\let\oldmarginpar\marginpar
\renewcommand\marginpar[1]{\-\oldmarginpar[\raggedleft\footnotesize #1]%
{\raggedright\footnotesize #1}}
\begin{document}

\newtheorem{theorem}{Theorem}[section]
\newtheorem{corollary}[theorem]{Corollary}
\newtheorem{lemma}[theorem]{Lemma}
\newtheorem{proposition}[theorem]{Proposition}
\theoremstyle{definition}
\newtheorem{definition}[theorem]{Definition}
\theoremstyle{remark}
\newtheorem{remark}[theorem]{Remark}
\theoremstyle{definition}
\newtheorem{example}[theorem]{Example}
\theoremstyle{acknowledgment}
\newtheorem{acknowledgment}[theorem]{Acknowledgment}

\numberwithin{equation}{section}

\title[On CR-statistical submanifolds of holomorphic]{On CR-statistical submanifolds of \\holomorphic statistical manifolds}

\author[A.N. Siddiqui]{Aliya Naaz Siddiqui}
\address{Department of Mathematics, \\ Faculty of Natural Sciences, \\ Jamia Millia Islamia, \\ New Delhi - 110025, India}
\email{aliyanaazsiddiqui9@gmail.com}

\author[F.R. Al-Solamy]{Falleh R. Al-Solamy}
\address{Department of Mathematics, \\ Faculty of Science, \\ King Abdulaziz University, \\ Jeddah 21589, \\ Saudi Arabia}
\email{falleh@hotmail.com}

\author[M.H. Shahid]{Mohammad Hasan Shahid}
\address{Department of Mathematics, \\ Faculty of Natural Sciences, \\ Jamia Millia Islamia, \\ New Delhi - 110025, India}
\email{hasan$\_$jmi@yahoo.com}

\author[I. Mihai]{Ion Mihai}
\address{Department of Mathematics, \\ Faculty of Mathematics and Computer Science,\\ University of Bucharest, \\ Bucharest, 010014, Romania}
\email{imihai@fmi.unibuc.ro}

\subjclass{53B25, 53C15, 53B35, 53C40.}

\keywords{statistical manifolds; holomorphic statistical manifolds; CR-statistical submanifolds; CR-products; Ricci curvature.}

\begin{abstract}
In the present paper, we investigate some properties of the distributions involved in the definition of a CR-statistical submanifold. The characterization of a CR-product in holomorphic statistical manifolds is given. By using an optimization technique, we establish a relationship between the Ricci curvature and the squared norm of the mean curvature of any submanifold in the same ambient space. The equality case is also discussed here. This paper finishes with some related examples.
\end{abstract}

\maketitle

\section{Introduction}

In 1985, Amari \cite{article.1} introduced the notion of statistical manifolds in the context of information geometry. They may be considered as manifolds consisting of certain probability density functions. Geometrically, they are Riemannian manifolds with a certain affine connection. Beyond the expectations, statistical manifolds are familiar to researchers because they have appeared with alternate names in various research topics in affine geometry \cite{article.09} and in Hessian geometry \cite{article.8}. These manifolds have applications in document classification, face recognition, image analysis, clustering and so on. The oldest examples of statistical structures are the induced structures (consisting of the second fundamental form and the induced connection) on locally strongly convex hypersurfaces in $\mathbb{R}^{n+1}$ endowed with an equiaffine transversal vector field (in other words - with relative normalization). It is natural for researchers to try to build the submanifold theory and the complex manifold theory of statistical manifolds.

In 1978, A. Bejancu \cite{article.2} introduced the notion of a CR-submanifold of a Kaehler manifold with complex structure $\mathcal{J}$. The investigation of CR-submanifolds of Kaehler manifolds has a long history. Many geometers investigated CR-submanifolds in different ambient manifolds such as Hermitian manifolds, Kaehler manifolds, Sasakian manifolds, complex space forms, complex projective space, etc. In this direction, H. Furuhata and I. Hasegawa \cite{article.7} intensively studied their statistical version, that is, CR-statistical submanifolds in holomorphic statistical manifolds. Recently, M.N. Boyom et al. \cite{article.700} studied classification of totally umbilical CR-statistical submanifolds in holomorphic statistical manifolds with constant holomorphic curvature. Also, Milijevi$\acute{c}$ \cite{4mir} and Aliya et al. \cite{article} studied totally real statistical submanifolds in holomorphic statistical manifolds, independently.

The Curvatures invariants are widely used in the field of differential geometry and in physics also. The Ricci curvature is the essential term in the Einstein field equations, which plays a key role in general relativity. It is immensely studied in differential geometry as it gives a way of measuring the degree to which the geometry determined by a given Riemannian metric might differ from that of ordinary Euclidean $n$-space. A Riemannian manifold is said to be an Einstein manifold if the Ricci tensor satisfies the vacuum Einstein equation. The lower bounds on the Ricci tensor on a Riemannian manifold enable one to find global geometric and topological information by comparison with the geometry of a constant curvature space form. Recently, Aydin et al. \cite{a} investigated Chen-Ricci inequality for submanifolds with any codimension of statistical manifolds of constant curvature. Also, A. Mihai et al. established similar inequality with respect to a sectional curvature of the ambient Hessian manifold in \cite{a2}.

In this work, we wish to add some more results to CR-statistical submanifolds in holomorphic statistical manifolds, which are new objects originating from information geometry. We see whether the results in classical Riemannian case hold in our settings or not. Also, we obtain a Chen-Ricci inequality for CR-statistical submanifolds in a holomorphic statistical manifold of constant holomorphic sectional curvature by following an optimization method (see Section 5).

\section{Some Preliminaries}

In this section, we give some basic definitions and fundamental formulae which are related to this paper.

\begin{definition}
\cite{article.7} A statistical manifold is a Riemannian manifold $(\overline{N},g)$ endowed with a pair of torsion-free affine connections $\overline{\nabla}$ and $\overline{\nabla}^{*}$ satisfying
\begin{eqnarray*}
Xg(Y, Z) = g(\overline{\nabla}_{X}Y, Z) + g(Y, \overline{\nabla}_{X}^{*}Z),
\end{eqnarray*}
for any $X, Y, Z \in \Gamma(T\overline{N})$ and $\overline{\nabla}g$ is symmetric. The connections $\overline{\nabla}$ and $\overline{\nabla}^{*}$ are called dual connections and satisfy $\big(\overline{\nabla}^{*}\big)^{*} = \overline{\nabla}$.
\end{definition}

\begin{remark}
\cite{article.7}
\begin{enumerate}
\item[(a)] If $(\overline{\nabla}, g)$ is a statistical structure on $\overline{N}$, then $(\overline{\nabla}^{*}, g)$ is also a statistical structure.
\item[(b)] Any torsion-free affine connection $\overline{\nabla}$ always has a dual connection given by
\begin{eqnarray*}
2\overline{\nabla}^{0} = \overline{\nabla} + \overline{\nabla}^{*},
\end{eqnarray*}
where $\overline{\nabla}^{0}$ is the Levi-Civita connection on $\overline{N}$.
\end{enumerate}
\end{remark}

\begin{definition}
\cite{article.7} Let $\overline{N}$ be a Kaehler manifold with almost complex structure $\mathcal{J} \in \Gamma(T\overline{N}^{(1,1)})$ and metric $g$. A quadruple $(\overline{N}, \overline{\nabla}, g, \mathcal{J})$ is called a holomorphic statistical manifold if
\begin{enumerate}
\item[(a)] $(\overline{\nabla}, g)$ is a statistical structure on $\overline{N}$,
\item[(b)] a $2$-form $\omega$ on $\overline{N}$, given by
\begin{eqnarray*}
\omega(X, Y) = g(X, \mathcal{J}Y),
\end{eqnarray*}
for any $X, Y \in \Gamma(T\overline{N})$, is $\overline{\nabla}-$parallel, that is, $\overline{\nabla}\omega = 0$.
\end{enumerate}
\end{definition}

\begin{remark}
\cite{article.7}
\begin{enumerate}
\item[(a)] For a holomorphic statistical manifold $\big(\overline{N}, \mathcal{J}, g\big)$, we have
\begin{eqnarray}\label{c}
\overline{\nabla}_{X}(\mathcal{J}Y) = \mathcal{J}\overline{\nabla}_{X}^{*}Y,
\end{eqnarray}
for any $X, Y \in \Gamma(T\overline{N})$.
\item[(b)] Since $\omega$ is skew-symmetric, we have $g(X, \mathcal{J}Y) = - g(\mathcal{J}X, Y)$, for any $X, Y \in \Gamma(T\overline{N})$.
\item[(c)] A holomorphic statistical manifold is nothing but a special Kaehler manifold if $\overline{\nabla}$ is flat.
\end{enumerate}
\end{remark}

Let $(\overline{N}, \overline{\nabla}, g)$ be a statistical manifold and $N$ be a submanifold of $\overline{N}$. By $T_{x}^{\perp}N$ we denote
the normal space of $N$, i.e., $T_{x}^{\perp}N = \big\{v \in T_{x}\overline{N} \big | g(u, v) = 0, u \in T_{x}N\big\}$. Define \cite{article.7}
\begin{equation}\label{e}
\left .
\begin{tabular}{ccc}
$\overline{\nabla}_{X}Y = \nabla_{X}Y + \mathcal{B}(X, Y)$,\\
$\overline{\nabla}_{X}^{*}Y = \nabla_{X}^{*}Y + \mathcal{B}^{*}(X, Y)$,\\
$\overline{\nabla}_{X}V = - \mathcal{A}_{V}(X) + \nabla_{X}^{\perp}V$,\\
$\overline{\nabla}_{X}^{*}V = - \mathcal{A}_{V}^{*}(X) + \nabla_{X}^{\perp *}V$,
\end{tabular}
\right \}
\end{equation}
for any $X, Y \in \Gamma(TN)$ and $V \in \Gamma(T^{\perp}N)$. Here $\overline{\nabla}$ and $\overline{\nabla}^{*}$ (respectively, $\nabla$ and $\nabla^{*}$) are the dual connections on $\overline{N}$ (respectively, on $N$), $\mathcal{B}$ and $\mathcal{B}^{*}$ are symmetric and bilinear, called the imbedding curvature tensor of $N$ in $\overline{N}$ for $\overline{\nabla}$ and the imbedding curvature tensor of $N$ in $\overline{N}$ for $\overline{\nabla}^{*}$, respectively. Since $\mathcal{B}$ and $\mathcal{B}^{*}$ are bilinear, the linear transformations $\mathcal{A}_{V}$ and $\mathcal{A}_{V}^{*}$ are related to the imbedding curvature tensors by \cite{article.7}
\begin{eqnarray}\label{f}
\left .
\begin{tabular}{ccc}
$g(\mathcal{B}(X, Y), V) = g(\mathcal{A}_{V}^{*}(X), Y)$,\\
$g(\mathcal{B}^{*}(X, Y), V) = g(\mathcal{A}_{V}(X), Y)$,
\end{tabular}
\right \}
\end{eqnarray}
for any $X, Y \in \Gamma(TN)$ and $V \in \Gamma(T^{\perp}N)$.

Let $\overline{R}$ and $R$ be the curvature tensor fields with respect to $\overline{\nabla}$ and $\nabla$, respectively. Then the Gauss equation is given by \cite{article.7}
\begin{eqnarray}\label{g}
g(\overline{R}(X, Y) Z, W) &=& g(R(X, Y) Z, W) + g(\mathcal{B}(X, Z), \mathcal{B}^{*}(Y, W)) \nonumber \\
&&- g(\mathcal{B}^{*}(X, W), \mathcal{B}(Y, Z)),
\end{eqnarray}
for any $X, Y, Z, W \in \Gamma(TN)$. Similarly, $\overline{R}^{*}$ and $R^{*}$ are respectively the curvature tensor fields with respect to $\overline{\nabla}^{*}$ and $\nabla^{*}$. Then the curvature tensor fields of $\overline{N}$ and $N$ are respectively given by \cite{article.7}
\begin{eqnarray}\label{g1}
\overline{\mathcal{S}} = \frac{1}{2}(\overline{R} + \overline{R}^{*}),  \hspace{0.2 cm} \textrm{and} \hspace{0.2 cm}  \mathcal{S} = \frac{1}{2}(R + R^{*}).
\end{eqnarray}

Suppose that $dim(N)= m$ and $dim(\overline{N}) = 2n$. We consider a local orthonormal tangent frame $\{e_{1}, \dots, e_{m}\}$ of $T_{x}N$ and a local orthonormal normal frame $\{e_{m+1}, \dots, e_{2n}\}$ of $T_{x}^{\perp}N$ in $\overline{N}$, $x \in N$. Then the mean curvature vectors $H$ and $H^{*}$ of $N$ in $\overline{N}$ are
$$H = \frac{1}{m}\sum_{i = 1}^{m}\mathcal{B}(e_{i}, e_{i}),$$
and
$$H^{*} = \frac{1}{m}\sum_{i = 1}^{m}\mathcal{B}^{*}(e_{i}, e_{i}).$$
Also, we set
$$h_{ij}^{r} = g(\mathcal{B}(e_{i}, e_{j}), e_{r}),$$
and
$$h_{ij}^{*r} = g(\mathcal{B}^{*}(e_{i}, e_{j}), e_{r}),$$
for $i, j \in \{1, \dots, m\}$, $r \in \{m+1, \dots, 2n\}$.

Let $X$ be a unit vector such that $||X|| = 1$. We choose an orthonormal frame $\{e_{1}, \dots, e_{m}\}$ of $T_{x}N$, $x \in N$, such that $e_{1} = X$. Then the Ricci curvature at $X$ is given by
\begin{eqnarray*}
\overline{Ric}^{\overline{\nabla}, \overline{\nabla}^{*}}(X) &=& \sum_{i=2}^{m}\overline{\mathcal{K}}^{\overline{\nabla}, \overline{\nabla}^{*}}(X \wedge e_{i}) \\
&=& \frac{1}{2}\bigg\{\sum_{i=2}^{m}\overline{\mathcal{K}}(X \wedge e_{i}) + \sum_{i=2}^{m}\overline{\mathcal{K}}^{*}(X \wedge e_{i})\bigg\},
\end{eqnarray*}
where $\overline{\mathcal{K}}(e_{i} \wedge e_{j})$ denotes the sectional curvature of the 2-plane section spanned by $e_{i}$ and $e_{j}$.

For any $X \in \Gamma(TN)$, we put \cite{article.9}
\begin{eqnarray}
\mathcal{J}X = \mathcal{P}X + \mathcal{F}X,
\end{eqnarray}
where $\mathcal{P}X$ and $\mathcal{F}X$ are the tangential and normal components of $\mathcal{J}X$, respectively. Then $\mathcal{P}$ is an endomorphism of the tangent bundle $TN$ and $\mathcal{F}$ is a normal-bundle-valued $1-$form on $TN$.

In the similar way, for any $V \in \Gamma(T^{\perp}N)$, we put \cite{article.9}
\begin{eqnarray}
\mathcal{J}V = tV + fV,
\end{eqnarray}
where $tV$ and $fV$ are the tangential and normal components of $\mathcal{J}V$, respectively. Then $f$ is an endomorphism of the normal bundle $T^{\perp}N$ and $t$ is a tangent bundle-valued $1-$form on $T^{\perp}N$.

The statistical version of the definition of a CR-submanifold is as follows:

\begin{definition}
\cite{article.7} A statistical submanifold $N$ is called a CR-statistical submanifold in a holomorphic statistical manifold $\overline{N}$ of dimension $2m \geq 4$ if $N$ is CR-submanifold in $\overline{N}$, i.e., there exists a differentiable distribution $\mathcal{D} : x \rightarrow \mathcal{D}_{x} \subseteq T_{x}N$ on $N$ satisfying the following conditions:
\begin{enumerate}
\item[(a)] $\mathcal{D}$ is holomorphic, i.e., $\mathcal{J}\mathcal{D}_{x} = \mathcal{D}_{x} \subseteq T_{x}N$ for each $x \in N$, and
\item[(b)] the complementary orthogonal distribution $\mathcal{D}^{\perp} : x \rightarrow \mathcal{D}_{x}^{\perp} \subseteq T_{x}N$ is
totally real, i.e., $\mathcal{J}\mathcal{D}_{x}^{\perp} \subset T_{x}^{\perp}N$ for each $x \in N$.
\end{enumerate}
\end{definition}

\begin{remark}\label{q1}
\cite{article.7} CR-statistical submanifolds are characterized by the condition $\mathcal{FP} = 0$.
\end{remark}

\begin{definition}
\cite{article.7} Let $N$ be a CR-statistical submanifold of a holomorphic statistical manifold $\overline{N}$. Then $N$ is said to be
\begin{itemize}
\item[(a)] mixed totally geodesic with respect to $\overline{\nabla}$ if $\mathcal{B}(X, Y) = 0$ for any $X \in \mathcal{D}$ and $Y \in \mathcal{D}^{\perp}$.
\item[(a)$^*$] mixed totally geodesic with respect to $\overline{\nabla}^{*}$ if $\mathcal{B}^{*}(X, Y) = 0$ for any $X \in \mathcal{D}$ and $Y \in \mathcal{D}^{\perp}$.
\end{itemize}
\end{definition}

\begin{definition}
\cite{article.7} A statistical submanifold $N$ of a holomorphic statistical manifold $\overline{N}$ is called a holomorphic submanifold $(\mathcal{F} = 0$ and $t = 0)$ if the almost complex structure $\mathcal{J}$ of $\overline{N}$ carries each tangent space of $N$ into itself, whereas it is said to be a totally real submanifold $(\mathcal{P} = 0)$ if the almost complex structure $\mathcal{J}$ of $\overline{N}$ carries each tangent space of $N$ into its corresponding normal space. If $\mathcal{J}(TN)= T^{\perp}N$, then $N$ is called a Lagrangian submanifold ($\mathcal{P} = 0$ and $f = 0$). A CR-statistical manifold is said to be generic if $\mathcal{J}\mathcal{D}^{\perp} = T^{\perp}N$ and $\mathcal{D} \neq 0$ $(f = 0)$. If $\mathcal{D} \neq 0$ and $\mathcal{D}^{\perp} \neq 0$, then $N$ is said to be proper.
\end{definition}


For a CR-statistical submanifold $N$ we shall denote by $\mu$ the orthogonal complementary subbundle of $\mathcal{J}\mathcal{D}^{\perp}$ in $T^{\perp}N$, we have
\begin{eqnarray}
T^{\perp}N = \mathcal{J}\mathcal{D}^{\perp} \oplus \mu.
\end{eqnarray}

\begin{definition}
\cite{article.7} A holomorphic statistical manifold $(\overline{N}, \overline{\nabla}, g, \mathcal{J})$ is said to be of constant holomorphic curvature $c \in \mathbb{R}$ if the following curvature equation holds
\begin{eqnarray}\label{d}
\overline{\mathcal{S}}(X, Y)Z &=& \frac{c}{4} \big\{g(Y, Z)X - g(X, Z)Y + g(\mathcal{J}Y, Z)\mathcal{J}X \nonumber \\
&&- g(\mathcal{J}X, Z)\mathcal{J}Y + 2g(X, \mathcal{J}Y)\mathcal{J}Z \big\}.
\end{eqnarray}
for any $X, Y, Z \in \Gamma(T\overline{N})$. It is denoted by $\overline{N}(c)$.
\end{definition}

\section{Some Basic Results}

We prove the following results:

\begin{proposition}
Let $N$ be a statistical submanifold of a holomorphic statistical manifold $\overline{N}$. Then
\begin{eqnarray*}
\mathcal{A}_{V}^{*}tU = \mathcal{A}_{U}^{*}tV,
\end{eqnarray*}
for any $U, V \in \Gamma(T^{\perp}N)$ if and only if
\begin{eqnarray*}
\nabla_{X}^{\perp}fV = f\nabla_{X}^{\perp *}V,
\end{eqnarray*}
for any $X \in \Gamma(TN)$ and $V \in \Gamma(T^{\perp}N)$.
\end{proposition}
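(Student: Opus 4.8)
The plan is to derive both stated conditions from a single identity obtained by differentiating $\mathcal{J}V$ along a tangent direction and separating tangential and normal parts. First I would apply the fundamental relation \eqref{c}, which holds for all vector fields on $\overline{N}$, to a normal field $V \in \Gamma(T^{\perp}N)$, so that $\overline{\nabla}_{X}(\mathcal{J}V) = \mathcal{J}\overline{\nabla}_{X}^{*}V$ for $X \in \Gamma(TN)$. On the left I would substitute $\mathcal{J}V = tV + fV$ and use the Gauss and Weingarten formulas \eqref{e}, noting that $tV \in \Gamma(TN)$ while $fV \in \Gamma(T^{\perp}N)$. On the right I would expand $\overline{\nabla}_{X}^{*}V = -\mathcal{A}_{V}^{*}(X) + \nabla_{X}^{\perp *}V$ and then split the two pieces as $\mathcal{J}\mathcal{A}_{V}^{*}(X) = \mathcal{P}\mathcal{A}_{V}^{*}(X) + \mathcal{F}\mathcal{A}_{V}^{*}(X)$ and $\mathcal{J}\nabla_{X}^{\perp *}V = t\nabla_{X}^{\perp *}V + f\nabla_{X}^{\perp *}V$.

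Comparing the normal components of the two sides then yields
\begin{equation*}
\nabla_{X}^{\perp}fV - f\nabla_{X}^{\perp *}V = -\mathcal{F}\mathcal{A}_{V}^{*}(X) - \mathcal{B}(X, tV).
\end{equation*}
Consequently the condition $\nabla_{X}^{\perp}fV = f\nabla_{X}^{\perp *}V$, for all $X$ and $V$, is equivalent to the pointwise identity $\mathcal{F}\mathcal{A}_{V}^{*}(X) + \mathcal{B}(X, tV) = 0$. This reduces the whole proposition to showing that this last identity is in turn equivalent to $\mathcal{A}_{V}^{*}tU = \mathcal{A}_{U}^{*}tV$ for all $U, V \in \Gamma(T^{\perp}N)$.

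Next I would translate both surviving conditions into statements about the single scalar $g(\mathcal{B}(X, tU), V)$. For the operator condition I would pair $\mathcal{A}_{V}^{*}tU = \mathcal{A}_{U}^{*}tV$ against an arbitrary $X \in \Gamma(TN)$ and invoke the duality \eqref{f} together with the symmetry of $\mathcal{B}$ to obtain $g(\mathcal{B}(X, tU), V) = g(\mathcal{B}(X, tV), U)$. For the identity $\mathcal{F}\mathcal{A}_{V}^{*}(X) + \mathcal{B}(X, tV) = 0$ I would instead pair against an arbitrary normal field $U$; here the key computation is the adjointness relation $g(\mathcal{F}W, U) = -g(W, tU)$ for $W \in \Gamma(TN)$, which follows from the skew-symmetry $g(\mathcal{J}W, U) = -g(W, \mathcal{J}U)$ and the definitions of $\mathcal{P}, \mathcal{F}, t$. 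Applying this with $W = \mathcal{A}_{V}^{*}(X)$ and using \eqref{f} once more turns the identity into exactly $g(\mathcal{B}(X, tU), V) = g(\mathcal{B}(X, tV), U)$, the same bilinear relation. Since $g$ is nondegenerate, the scalar pairings are equivalent to the original operator and normal equations, and the claimed equivalence follows.

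The expansions and the bookkeeping of tangential versus normal components are routine substitutions into \eqref{c}, \eqref{e}, and \eqref{f}. The one step I expect to require genuine care — and which I regard as the crux — is establishing the adjointness $g(\mathcal{F}W, U) = -g(W, tU)$ and using it to bring the two conditions onto the common identity $g(\mathcal{B}(X, tU), V) = g(\mathcal{B}(X, tV), U)$; once that bridge is in place the proof closes immediately in both directions.
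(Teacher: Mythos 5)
Your proof is correct and follows essentially the same route as the paper: the identity you obtain by splitting $\overline{\nabla}_{X}(\mathcal{J}V) = \mathcal{J}\overline{\nabla}_{X}^{*}V$ into tangential and normal parts, namely $g(\nabla_{X}^{\perp}fV, U) = g(f\nabla_{X}^{\perp *}V, U) - g(\mathcal{F}\mathcal{A}^{*}_{V}X, U) - g(\mathcal{B}(X, tV), U)$, is exactly the formula the paper quotes from Furuhata--Hasegawa, and the paper then closes the argument just as you do, by converting everything through the duality (\ref{f}) and the self-adjointness of $\mathcal{A}^{*}$ into the common scalar relation. The only difference is that you derive the key identity rather than citing it, which makes your version more self-contained but not a different proof.
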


\begin{proof}
From \cite{article.7}, we have
\begin{eqnarray*}
g(\nabla_{X}^{\perp}fV, U) = g(f\nabla_{X}^{\perp *}V, U) - g(\mathcal{F}\mathcal{A}^{*}_{V}X, U) - g(\mathcal{B}(X, tV), U).
\end{eqnarray*}
Further, we derive that
\begin{eqnarray*}
g(\mathcal{A}^{*}_{V}X, tU) - g(\mathcal{A}_{U}^{*}tV, X) = 0.
\end{eqnarray*}
Our assertion follows from the last relation and self-adjoint property of $\mathcal{A}^{*}$.
\end{proof}

\begin{proposition}\label{1}
Let $N$ be a statistical submanifold of a holomorphic statistical manifold $\overline{N}$. Then
\begin{eqnarray*}
\nabla_{X}^{\perp}(\mathcal{F}Y) = \mathcal{F}\nabla_{X}^{*}Y,
\end{eqnarray*}
for any $X, Y \in \Gamma(TN)$ if and only if
\begin{eqnarray*}
\nabla_{X}(tV) = t\nabla_{X}^{\perp *}V,
\end{eqnarray*}
for any $X \in \Gamma(TN)$ and $V \in \Gamma(T^{\perp}N)$.
\end{proposition}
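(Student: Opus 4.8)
The plan is to exploit the holomorphic identity \eqref{c}, i.e.\ $\overline{\nabla}_{X}(\mathcal{J}W) = \mathcal{J}\overline{\nabla}_{X}^{*}W$, applied in two different ways: once with $W = Y \in \Gamma(TN)$ and once with $W = V \in \Gamma(T^{\perp}N)$. In each case I would expand both sides using the Gauss--Weingarten relations \eqref{e}, decompose $\mathcal{J}$ into its components $(\mathcal{P}, \mathcal{F})$ on $TN$ and $(t, f)$ on $T^{\perp}N$, and separate tangential from normal parts. This converts each of the two stated conditions into a purely tensorial identity between the second fundamental forms and shape operators; the final task is then to show that these two identities are equivalent.

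First, taking $W = Y$ and writing $\mathcal{J}Y = \mathcal{P}Y + \mathcal{F}Y$, $\overline{\nabla}_{X}^{*}Y = \nabla_{X}^{*}Y + \mathcal{B}^{*}(X,Y)$, the Gauss formula applied to $\mathcal{P}Y$ and the Weingarten formula applied to $\mathcal{F}Y$ reduce \eqref{c} to an identity whose normal component reads
\begin{align*}
\mathcal{B}(X, \mathcal{P}Y) + \nabla_{X}^{\perp}(\mathcal{F}Y) = \mathcal{F}\nabla_{X}^{*}Y + f\mathcal{B}^{*}(X,Y).
\end{align*}
Hence the first stated condition $\nabla_{X}^{\perp}(\mathcal{F}Y) = \mathcal{F}\nabla_{X}^{*}Y$ holds for all $X, Y$ if and only if
\begin{align*}
\mathcal{B}(X, \mathcal{P}Y) = f\mathcal{B}^{*}(X,Y) \qquad (A)
\end{align*}
for all $X, Y \in \Gamma(TN)$.

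Symmetrically, taking $W = V$ and writing $\mathcal{J}V = tV + fV$, $\overline{\nabla}_{X}^{*}V = -\mathcal{A}_{V}^{*}(X) + \nabla_{X}^{\perp *}V$, the tangential component of \eqref{c} becomes
\begin{align*}
\nabla_{X}(tV) - \mathcal{A}_{fV}(X) = t\nabla_{X}^{\perp *}V - \mathcal{P}\mathcal{A}_{V}^{*}(X),
\end{align*}
so the second stated condition $\nabla_{X}(tV) = t\nabla_{X}^{\perp *}V$ holds for all $X, V$ if and only if
\begin{align*}
\mathcal{A}_{fV}(X) = \mathcal{P}\mathcal{A}_{V}^{*}(X) \qquad (B)
\end{align*}
for all $X \in \Gamma(TN)$, $V \in \Gamma(T^{\perp}N)$.

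The remaining step, which I expect to be the real work (although it is purely pointwise and algebraic), is to prove $(A) \Leftrightarrow (B)$. The idea is to pair $(A)$ with an arbitrary $V \in \Gamma(T^{\perp}N)$. On the left, \eqref{f} rewrites $g(\mathcal{B}(X, \mathcal{P}Y), V)$ as $g(\mathcal{A}_{V}^{*}(X), \mathcal{P}Y)$; replacing $\mathcal{P}Y$ by $\mathcal{J}Y$ (the $\mathcal{F}Y$-part is normal, hence orthogonal to the tangent vector $\mathcal{A}_{V}^{*}(X)$) and applying the skew-symmetry $g(\mathcal{J}X, Y) = -g(X, \mathcal{J}Y)$ together with the splitting of $\mathcal{J}\mathcal{A}_{V}^{*}(X)$ gives $-g(\mathcal{P}\mathcal{A}_{V}^{*}(X), Y)$. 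On the right, $g(f\mathcal{B}^{*}(X,Y), V) = g(\mathcal{J}\mathcal{B}^{*}(X,Y), V)$ (the $t$-part is tangent, hence orthogonal to $V$), which by the same skew-symmetry and \eqref{f} equals $-g(\mathcal{A}_{fV}(X), Y)$. Since $Y$ is arbitrary, these computations yield $(A) \Leftrightarrow (B)$, and the proposition follows. The only genuine difficulty is bookkeeping: one must keep the several tangential/normal splittings, the two applications of \eqref{f}, and the single sign coming from skew-symmetry mutually consistent, the structural reason the two twisted shape operators match being the metric adjointness $g(\mathcal{F}Y, V) = -g(Y, tV)$ of $\mathcal{F}$ and $t$.
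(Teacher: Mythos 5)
Your proof is correct and follows essentially the same route as the paper: both rest on the normal component of $\overline{\nabla}_{X}(\mathcal{J}Y)=\mathcal{J}\overline{\nabla}_{X}^{*}Y$ for tangent $Y$ and the tangential component for normal $V$, linked by the duality relations (\ref{f}) and the skew-symmetry of $\mathcal{J}$ (equivalently, the adjointness of $\mathcal{F}$ and $-t$). The only difference is presentational: the paper cites these component formulas from Furuhata--Hasegawa and compresses the whole argument into a single chain of equalities pairing $\nabla_{X}^{\perp}(\mathcal{F}Y)-\mathcal{F}\nabla_{X}^{*}Y$ with $V$, whereas you derive them from the Gauss--Weingarten equations and pass through the intermediate conditions $(A)$ and $(B)$, making the proof self-contained.
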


\begin{proof}
From \cite{article.7}, we have
\begin{eqnarray*}
g(\nabla_{X}^{\perp}(\mathcal{F}Y) - \mathcal{F}\nabla_{X}^{*}Y, V) &=& g(f\mathcal{B}^{*}(X, Y), V) - g(\mathcal{B}(X, \mathcal{P}Y), V)\\
&=& - g(\mathcal{B}^{*}(X, Y), fV) - g(\mathcal{A}^{*}_{V}X, \mathcal{P}Y)\\
&=& - g(\mathcal{A}_{fV}X, Y) + g(\mathcal{P}\mathcal{A}^{*}_{V}X, Y)\\
&=& - g(\nabla_{X}(tV) - t\nabla_{X}^{\perp *}V, Y).
\end{eqnarray*}
This completes the proof of our proposition.
\end{proof}

\begin{proposition}
Let $N$ be a holomorphic statistical submanifold of a holomorphic statistical manifold $\overline{N}$. Then
\begin{enumerate}
\item[(a)] $tr_{g}\mathcal{B} = tr_{g}\mathcal{B}^{*} = 0$,
\item[(b)] $g(\mathcal{S}(X, \mathcal{J}X)\mathcal{J}X, X) = g(\overline{\mathcal{S}}(X, \mathcal{J}X)\mathcal{J}X, X) - 2 g(\mathcal{B}(X, X), \mathcal{B}^{*}(X, X))$, for $X \in \Gamma(TN)$.
\end{enumerate}
\end{proposition}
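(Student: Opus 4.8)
The plan is to exploit the fact that for a holomorphic submanifold ($\mathcal{F} = 0$, $t = 0$) the almost complex structure $\mathcal{J}$ preserves both $TN$ and $T^{\perp}N$: for $X \in \Gamma(TN)$ one has $\mathcal{J}X = \mathcal{P}X \in \Gamma(TN)$, and for $V \in \Gamma(T^{\perp}N)$ one has $\mathcal{J}V = fV \in \Gamma(T^{\perp}N)$. Two preliminary facts will drive everything. First, replacing $Y$ by $\mathcal{J}Y$ in the parallelism relation (\ref{c}) and applying $\mathcal{J}$ (using $\mathcal{J}^{2} = -\mathrm{Id}$) produces the dual identity $\overline{\nabla}_{X}^{*}(\mathcal{J}Y) = \mathcal{J}\overline{\nabla}_{X}Y$. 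Second, the skew-symmetry of $\omega$ gives $g(\mathcal{J}X, \mathcal{J}Y) = g(X, Y)$, so $\mathcal{J}$ is a $g$-isometry; in particular $e$ and $\mathcal{J}e$ are orthonormal for any unit vector $e$, and $\dim N$ is even.

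Next I would decompose (\ref{c}) and its dual into tangential and normal components. Writing $\overline{\nabla}_{X}(\mathcal{J}Y) = \nabla_{X}(\mathcal{J}Y) + \mathcal{B}(X, \mathcal{J}Y)$ and $\mathcal{J}\overline{\nabla}_{X}^{*}Y = \mathcal{J}\nabla_{X}^{*}Y + \mathcal{J}\mathcal{B}^{*}(X, Y)$, and noting that $\mathcal{J}\nabla_{X}^{*}Y$ is tangent while $\mathcal{J}\mathcal{B}^{*}(X, Y)$ is normal, the normal parts give $\mathcal{B}(X, \mathcal{J}Y) = \mathcal{J}\mathcal{B}^{*}(X, Y)$; the dual identity gives $\mathcal{B}^{*}(X, \mathcal{J}Y) = \mathcal{J}\mathcal{B}(X, Y)$. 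Combining these with the symmetry of $\mathcal{B}$, $\mathcal{B}^{*}$ and $\mathcal{J}^{2} = -\mathrm{Id}$ then yields $\mathcal{B}(\mathcal{J}X, \mathcal{J}Y) = -\mathcal{B}(X, Y)$ and $\mathcal{B}^{*}(\mathcal{J}X, \mathcal{J}Y) = -\mathcal{B}^{*}(X, Y)$.

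Part (a) follows by choosing a $\mathcal{J}$-adapted orthonormal frame $\{e_{1}, \dots, e_{p}, \mathcal{J}e_{1}, \dots, \mathcal{J}e_{p}\}$ of $T_{x}N$, where $\dim N = 2p$. The trace splits as $\sum_{i=1}^{p}\big(\mathcal{B}(e_{i}, e_{i}) + \mathcal{B}(\mathcal{J}e_{i}, \mathcal{J}e_{i})\big)$, and since each summand vanishes by $\mathcal{B}(\mathcal{J}e_{i}, \mathcal{J}e_{i}) = -\mathcal{B}(e_{i}, e_{i})$, we obtain $tr_{g}\mathcal{B} = 0$; the computation for $\mathcal{B}^{*}$ is identical.

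For part (b), I would substitute $Y = Z = \mathcal{J}X$ and $W = X$ into the Gauss equation (\ref{g}) and into its dual (obtained by interchanging $\mathcal{B} \leftrightarrow \mathcal{B}^{*}$, $R \leftrightarrow R^{*}$, $\overline{R} \leftrightarrow \overline{R}^{*}$), then average the two via (\ref{g1}). Using $\mathcal{B}(\mathcal{J}X, \mathcal{J}X) = -\mathcal{B}(X, X)$ and its $*$-analogue, the extrinsic terms reduce to $g(\mathcal{B}(X, \mathcal{J}X), \mathcal{B}^{*}(X, \mathcal{J}X)) + g(\mathcal{B}(X, X), \mathcal{B}^{*}(X, X))$. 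The step I expect to be the crux is collapsing the first term: since $\mathcal{B}(X, \mathcal{J}X) = \mathcal{J}\mathcal{B}^{*}(X, X)$ and $\mathcal{B}^{*}(X, \mathcal{J}X) = \mathcal{J}\mathcal{B}(X, X)$, the isometry property of $\mathcal{J}$ gives $g(\mathcal{B}(X, \mathcal{J}X), \mathcal{B}^{*}(X, \mathcal{J}X)) = g(\mathcal{B}(X, X), \mathcal{B}^{*}(X, X))$, so the two extrinsic terms combine into $2\,g(\mathcal{B}(X, X), \mathcal{B}^{*}(X, X))$ and the asserted formula follows. Throughout, the main hazard is purely bookkeeping: keeping the sign conventions and the $\mathcal{B}/\mathcal{B}^{*}$ pairings straight in the Gauss equation and its dual.
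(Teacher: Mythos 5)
Your proof is correct and follows essentially the same route as the paper: both derive the relations $\mathcal{B}(X,\mathcal{J}Y)=\mathcal{J}\mathcal{B}^{*}(X,Y)$, $\mathcal{B}^{*}(X,\mathcal{J}Y)=\mathcal{J}\mathcal{B}(X,Y)$ (hence $\mathcal{B}(\mathcal{J}X,\mathcal{J}Y)=-\mathcal{B}(X,Y)$ and its $*$-analogue) from the parallelism identity (\ref{c}), prove (a) with a $\mathcal{J}$-adapted orthonormal frame, and prove (b) by averaging the Gauss equation and its dual and collapsing the mixed term $g(\mathcal{B}(X,\mathcal{J}X),\mathcal{B}^{*}(X,\mathcal{J}X))=g(\mathcal{B}(X,X),\mathcal{B}^{*}(X,X))$ via the isometry property of $\mathcal{J}$. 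The only difference is presentational: the paper cites these auxiliary relations from the literature, whereas you derive them explicitly, making your argument more self-contained.
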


\begin{proof}
Let $\{e_{1}, \dots, e_{n}, \mathcal{J}e_{1}, \dots, \mathcal{J}e_{n}\}$ be a local orthonormal frame on $N$. Then we have
\begin{eqnarray*}
tr_{g}\mathcal{B} &=& \sum_{i = 1}^{n}(\mathcal{B}(e_{i},e_{i}) + \mathcal{B}(\mathcal{J}e_{i}, \mathcal{J}e_{i})) = 0,
\end{eqnarray*}
and
\begin{eqnarray*}
tr_{g}\mathcal{B}^{*} &=& \sum_{i = 1}^{n}(\mathcal{B}^{*}(e_{i},e_{i}) + \mathcal{B}^{*}(\mathcal{J}e_{i}, \mathcal{J}e_{i}))\\
&=& - \sum_{i = 1}^{n}(\mathcal{J}\mathcal{B}^{*}(e_{i},\mathcal{J}e_{i}) - \mathcal{J}\mathcal{B}^{*}(\mathcal{J}e_{i}, e_{i})) = 0.
\end{eqnarray*}
Following \cite{article.7}, we have
\begin{eqnarray*}
g(\mathcal{S}(X, \mathcal{J}X)\mathcal{J}X, X) &=& g(\overline{\mathcal{S}}(X, \mathcal{J}X)\mathcal{J}X, X) - g(\mathcal{B}(\mathcal{J}X,X), \mathcal{B}^{*}(X, \mathcal{J}X))\\
&&+ \frac{1}{2}( g(\mathcal{B}^{*}(X,X), \mathcal{B}(\mathcal{J}X, \mathcal{J}X)) + g(\mathcal{B}(X,X), \mathcal{B}^{*}(\mathcal{J}X, \mathcal{J}X))) \\
&=& g(\overline{\mathcal{S}}(X, \mathcal{J}X)\mathcal{J}X, X) - g(\mathcal{B}^{*}(X,X), \mathcal{B}(X,X))\\
&&- g(\mathcal{B}(\mathcal{J}X,X), \mathcal{B}^{*}(X, \mathcal{J}X))\\
&=& g(\overline{\mathcal{S}}(X, \mathcal{J}X)\mathcal{J}X, X) - 2g(\mathcal{B}^{*}(X,X), \mathcal{B}(X,X)),
\end{eqnarray*}
for any $X \in \Gamma(TN)$.
\end{proof}

\begin{proposition}\label{p1}
On a CR-statistical submanifold $N$ of a holomorphic statistical manifold $\overline{N}$, the following
\begin{eqnarray}\label{e21}
\mathcal{A}_{fV}X = - \mathcal{A}^{*}_{V}\mathcal{P}X
\end{eqnarray}
holds for any $X \in \mathcal{D}$ and $V \in \Gamma(fT^{\perp}N)$.
\end{proposition}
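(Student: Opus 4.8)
The plan is to reduce the operator identity \eqref{e21} to a pointwise scalar identity by pairing both sides against an arbitrary tangent vector $Y \in \Gamma(TN)$ and then translating through the shape-operator relations \eqref{f}. Since
\begin{eqnarray*}
g(\mathcal{A}_{fV}X, Y) = g(\mathcal{B}^{*}(X, Y), fV), \qquad g(\mathcal{A}_{V}^{*}\mathcal{P}X, Y) = g(\mathcal{B}(\mathcal{P}X, Y), V),
\end{eqnarray*}
proving \eqref{e21} is equivalent to establishing
\begin{eqnarray*}
g(\mathcal{B}^{*}(X, Y), fV) = - g(\mathcal{B}(\mathcal{P}X, Y), V)
\end{eqnarray*}
for all $Y \in \Gamma(TN)$, $X \in \mathcal{D}$ and $V \in \Gamma(fT^{\perp}N)$.

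To produce such a relation I would apply \eqref{c} in the form $\overline{\nabla}_{Y}(\mathcal{J}X) = \mathcal{J}\overline{\nabla}_{Y}^{*}X$. Because $X \in \mathcal{D}$ and $\mathcal{D}$ is holomorphic, $\mathcal{J}X = \mathcal{P}X$ is tangent, so the left-hand side is the covariant derivative of a tangent field and splits by the Gauss formula \eqref{e} into $\nabla_{Y}(\mathcal{P}X) + \mathcal{B}(Y, \mathcal{P}X)$. On the right I would write $\overline{\nabla}_{Y}^{*}X = \nabla_{Y}^{*}X + \mathcal{B}^{*}(Y, X)$ and apply $\mathcal{J}$, decomposing $\mathcal{J}\nabla_{Y}^{*}X = \mathcal{P}\nabla_{Y}^{*}X + \mathcal{F}\nabla_{Y}^{*}X$ and $\mathcal{J}\mathcal{B}^{*}(Y, X) = t\mathcal{B}^{*}(Y, X) + f\mathcal{B}^{*}(Y, X)$. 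Comparing the normal components then yields
\begin{eqnarray*}
\mathcal{B}(Y, \mathcal{P}X) = \mathcal{F}\nabla_{Y}^{*}X + f\mathcal{B}^{*}(Y, X).
\end{eqnarray*}

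The decisive step, and the main obstacle, is the correct use of the CR-splitting $T^{\perp}N = \mathcal{J}\mathcal{D}^{\perp} \oplus \mu$ when pairing this identity with $V$. The hypothesis $V \in \Gamma(fT^{\perp}N)$ forces $V$ into the $\mathcal{J}$-invariant subbundle $\mu$, so that $tV = 0$ and $fV = \mathcal{J}V$; moreover $\mathcal{F}$ carries tangent vectors into $\mathcal{J}\mathcal{D}^{\perp}$, which is $g$-orthogonal to $\mu$. Hence $g(\mathcal{F}\nabla_{Y}^{*}X, V) = 0$, and the unwanted term drops, leaving $g(\mathcal{B}(Y, \mathcal{P}X), V) = g(f\mathcal{B}^{*}(Y, X), V)$. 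Verifying these two facts about $\mu$ directly from the definitions is where the real content lies; everything else is bookkeeping, since if $V$ were allowed a $\mathcal{J}\mathcal{D}^{\perp}$-component the extra term would survive and the identity would fail.

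Finally I would convert the surviving right-hand side using $g(fW, V) = g(\mathcal{J}W, V)$ for normal $W$ (the tangential part $tW$ is orthogonal to the normal vector $V$), followed by the skew-symmetry of $\omega$, namely $g(\mathcal{J}W, V) = -g(W, \mathcal{J}V) = -g(W, fV)$, to obtain $g(\mathcal{B}(Y, \mathcal{P}X), V) = -g(\mathcal{B}^{*}(Y, X), fV)$. Reading this back through \eqref{f} and using the symmetry of $\mathcal{B}$ and $\mathcal{B}^{*}$ gives $g(\mathcal{A}_{V}^{*}\mathcal{P}X, Y) = -g(\mathcal{A}_{fV}X, Y)$ for every $Y \in \Gamma(TN)$, which is precisely \eqref{e21}.
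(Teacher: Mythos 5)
Your proof is correct, but it is the metric dual of the paper's argument rather than a reproduction of it. The paper quotes from Furuhata--Hasegawa the identity $\nabla_{Y}(tV) - t\nabla_{Y}^{\perp *}V = \mathcal{A}_{fV}Y - \mathcal{P}\mathcal{A}^{*}_{V}Y$ (the \emph{tangential} component of $\overline{\nabla}_{Y}(\mathcal{J}V) = \mathcal{J}\overline{\nabla}_{Y}^{*}V$, i.e.\ of \eqref{c} applied to a \emph{normal} field) and pairs it against $X \in \mathcal{D}$: since $V \in \Gamma(fT^{\perp}N)$ gives $tV = 0$, and $t\nabla_{Y}^{\perp *}V$ always lies in $\mathcal{D}^{\perp}$, the left-hand side dies against $X$, and self-adjointness of $\mathcal{A}_{fV}$, $\mathcal{A}^{*}_{V}$ plus skew-symmetry of $\mathcal{P}$ give \eqref{e21}. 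You instead apply \eqref{c} to the \emph{tangent} field $\mathcal{J}X = \mathcal{P}X$, extract the \emph{normal} component $\mathcal{B}(Y, \mathcal{P}X) = \mathcal{F}\nabla_{Y}^{*}X + f\mathcal{B}^{*}(Y, X)$, and pair against $V$, killing the unwanted term via $\mathcal{F}(TN) \subseteq \mathcal{J}\mathcal{D}^{\perp} \perp \mu$. The two derived identities are exactly $g$-duals of one another (this duality is what the paper's Proposition \ref{1} records), so the proofs are mirror images with different orthogonality inputs: $t(T^{\perp}N) \subseteq \mathcal{D}^{\perp} \perp \mathcal{D}$ for the paper, $\mathcal{F}(TN) \subseteq \mathcal{J}\mathcal{D}^{\perp} \perp \mu$ for you. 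The paper's route buys brevity, since its key identity is citable; yours buys self-containedness, using only \eqref{c}, \eqref{e} and \eqref{f}. The two facts you flag as carrying the real content --- $fT^{\perp}N = \mu$ (hence $tV = 0$ and $fV = \mathcal{J}V$) and $\mathcal{F}(TN) \subseteq \mathcal{J}\mathcal{D}^{\perp}$ --- are indeed correct, both following at once from the splitting $T^{\perp}N = \mathcal{J}\mathcal{D}^{\perp} \oplus \mu$ and the $\mathcal{J}$-invariance of $\mu$, so no gap remains.
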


\begin{proof}
From \cite{article.7}, it follows that
\begin{eqnarray*}
0 &=& g(\nabla_{Y}(tV) - t\nabla_{Y}^{\perp *}V, X)\\
&=& g(\mathcal{A}_{fV}Y - \mathcal{P}\mathcal{A}^{*}_{V}Y, X)\\
&=& g(\mathcal{A}_{fV}X, Y) + g(\mathcal{A}^{*}_{V}\mathcal{P}X, Y),
\end{eqnarray*}
for any $X \in \mathcal{D}$, $Y \in \Gamma(TN)$ and $V \in \Gamma(fT^{\perp}N)$. Thus, we get the desired result.
\end{proof}

\begin{proposition}\label{4}
Let $N$ be a CR-statistical submanifold of a holomorphic statistical manifold $\overline{N}$. Then
\begin{eqnarray*}
\nabla_{X}^{\perp}\mathcal{J}Y - \nabla_{Y}^{\perp}\mathcal{J}X \in \mathcal{J}\mathcal{D}^{\perp},
\end{eqnarray*}
for any $X, Y \in \mathcal{D}^{\perp}$.
\end{proposition}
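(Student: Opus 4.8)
The plan is to exploit the defining relation \eqref{c} of a holomorphic statistical manifold, namely $\overline{\nabla}_{X}(\mathcal{J}Y) = \mathcal{J}\overline{\nabla}_{X}^{*}Y$, for tangent fields $X, Y \in \mathcal{D}^{\perp}$. Since $\mathcal{D}^{\perp}$ is totally real, $\mathcal{J}Y = \mathcal{F}Y$ is a normal field lying in $\mathcal{J}\mathcal{D}^{\perp}$; hence the left-hand side is governed by the Weingarten-type formula in \eqref{e}, whose normal component is exactly $\nabla_{X}^{\perp}(\mathcal{J}Y)$. On the right-hand side I would write $\overline{\nabla}_{X}^{*}Y = \nabla_{X}^{*}Y + \mathcal{B}^{*}(X,Y)$ and apply $\mathcal{J}$, decomposing the two resulting terms into tangential and normal parts via $\mathcal{J}Z = \mathcal{P}Z + \mathcal{F}Z$ for the tangent part $\nabla_{X}^{*}Y$ and $\mathcal{J}V = tV + fV$ for the normal part $\mathcal{B}^{*}(X,Y)$.

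Comparing the normal components on the two sides then yields the single identity $\nabla_{X}^{\perp}(\mathcal{J}Y) = \mathcal{F}\nabla_{X}^{*}Y + f\,\mathcal{B}^{*}(X,Y)$. The decisive step is to antisymmetrize this in $X$ and $Y$: since $\mathcal{B}^{*}$ is symmetric, the terms $f\,\mathcal{B}^{*}(X,Y)$ and $f\,\mathcal{B}^{*}(Y,X)$ cancel, leaving
\[
\nabla_{X}^{\perp}\mathcal{J}Y - \nabla_{Y}^{\perp}\mathcal{J}X = \mathcal{F}\big(\nabla_{X}^{*}Y - \nabla_{Y}^{*}X\big).
\]

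It then remains only to observe that $\mathcal{F}$ maps $TN$ into $\mathcal{J}\mathcal{D}^{\perp}$: on the holomorphic distribution $\mathcal{D}$ one has $\mathcal{F} = 0$, while on $\mathcal{D}^{\perp}$ the map $\mathcal{F}$ coincides with $\mathcal{J}$ and therefore lands in $\mathcal{J}\mathcal{D}^{\perp}$ by definition. Since $\nabla_{X}^{*}Y - \nabla_{Y}^{*}X$ is tangent to $N$, its image under $\mathcal{F}$ lies in $\mathcal{J}\mathcal{D}^{\perp}$, which is exactly the claim. I do not anticipate a serious obstacle here; the only points requiring care are the correct bookkeeping of the tangential/normal splittings when applying $\mathcal{J}$ on the right-hand side, and the verification that $\mathcal{F}(TN) \subseteq \mathcal{J}\mathcal{D}^{\perp}$, both of which follow directly from the CR-decomposition $TN = \mathcal{D} \oplus \mathcal{D}^{\perp}$ together with $T^{\perp}N = \mathcal{J}\mathcal{D}^{\perp} \oplus \mu$.
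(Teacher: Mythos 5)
Your proof is correct, and it reaches the conclusion by a genuinely different, though closely related, route. Both arguments rest on the same three ingredients --- the relation \eqref{c}, the Gauss--Weingarten formulas \eqref{e}, and the symmetry of $\mathcal{B}^{*}$ --- but the paper argues by duality: it fixes $V \in \Gamma(\mu)$ and shows $g(\nabla_{X}^{\perp}\mathcal{J}Y - \nabla_{Y}^{\perp}\mathcal{J}X, V) = 0$, moving $\mathcal{J}$ across the metric and using that $\mathcal{J}V$ is again normal (since $\mu$ is $\mathcal{J}$-invariant), so that after the Gauss formula only the difference $\mathcal{B}^{*}(X,Y) - \mathcal{B}^{*}(Y,X) = 0$ survives; orthogonality to all of $\mu$ is then precisely membership in $\mathcal{J}\mathcal{D}^{\perp}$. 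You instead equate normal components in $\overline{\nabla}_{X}\mathcal{J}Y = \mathcal{J}\overline{\nabla}_{X}^{*}Y$ to obtain the pointwise identity $\nabla_{X}^{\perp}\mathcal{J}Y = \mathcal{F}\nabla_{X}^{*}Y + f\mathcal{B}^{*}(X,Y)$, antisymmetrize, and finish with the inclusion $\mathcal{F}(TN) \subseteq \mathcal{J}\mathcal{D}^{\perp}$. Your route costs one extra (easy) verification --- that $\mathcal{F}$ vanishes on $\mathcal{D}$ and coincides with $\mathcal{J}$ on $\mathcal{D}^{\perp}$ --- but it buys a sharper conclusion, namely the explicit formula
\[
\nabla_{X}^{\perp}\mathcal{J}Y - \nabla_{Y}^{\perp}\mathcal{J}X = \mathcal{F}\big(\nabla_{X}^{*}Y - \nabla_{Y}^{*}X\big) = \mathcal{F}[X,Y],
\]
the last equality because $\nabla^{*}$ is torsion-free. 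This identifies exactly which element of $\mathcal{J}\mathcal{D}^{\perp}$ the difference is and makes transparent the link between this proposition and the integrability of $\mathcal{D}^{\perp}$, neither of which is visible from the paper's pairing argument; conversely, the paper's argument is shorter and needs no statement about the range of $\mathcal{F}$. (Incidentally, your derivation also sidesteps a small typo in the paper's proof, whose penultimate line reads $\mathcal{B}^{*}(X,Y) - \mathcal{B}^{*}(X,Y)$ where $\mathcal{B}^{*}(X,Y) - \mathcal{B}^{*}(Y,X)$ is meant.)
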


\begin{proof}
For any $X, Y \in \mathcal{D}^{\perp}$ and $V \in \Gamma(\mu)$, we obtain
\begin{eqnarray*}
g(\nabla_{X}^{\perp}\mathcal{J}Y - \nabla_{Y}^{\perp}\mathcal{J}X, V) &=& g(\overline{\nabla}_{X}\mathcal{J}Y - \overline{\nabla}_{Y}\mathcal{J}X, V) \\
&=& g(\mathcal{J}(\overline{\nabla}_{X}^{*}Y - \overline{\nabla}_{Y}^{*}X), V)\\
&=& - g(\overline{\nabla}_{X}^{*}Y - \overline{\nabla}_{Y}^{*}X, \mathcal{J}V)\\
&=& - g(\mathcal{B}^{*}(X, Y) - \mathcal{B}^{*}(X, Y), \mathcal{J}V)\\
&=& 0.
\end{eqnarray*}
This proves our assertion.
\end{proof}

\begin{definition}
Let $S$ be a $r$-dimensional distribution on a statistical manifold $N$. Then $S$ is said to be
\begin{itemize}
\item[(a)] minimal with respect to $\nabla$ if \hspace{0.01 cm} $\sum_{i = 1}^{r} \nabla_{e_{i}}e_{i} \in S$.
\item[(a)$^*$] minimal with respect to $\nabla^{*}$ if \hspace{0.01 cm} $\sum_{i = 1}^{r} \nabla_{e_{i}}^{*}e_{i} \in S$.
\end{itemize}
\end{definition}

For minimality of the holomorphic distribution $\mathcal{D}$ with respect to $\nabla$ and $\nabla^{*}$, we have the following:

\begin{proposition}\label{12}
Let $N$ be a CR-statistical submanifold of a holomorphic statistical manifold $\overline{N}$, then the holomorphic distribution $\mathcal{D}$ is minimal with respect to $\nabla$ and $\nabla^{*}$.
\end{proposition}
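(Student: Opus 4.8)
The plan is to test minimality directly against the orthogonal complement. Write $\dim\mathcal{D}=2p$, and use that $\mathcal{D}$ is holomorphic together with the fact that $\mathcal{J}$ is a $g$-isometry (so $g(e,\mathcal{J}e)=-g(\mathcal{J}e,e)=0$) to choose a $\mathcal{J}$-adapted orthonormal frame $\{e_{1},\dots,e_{p},\mathcal{J}e_{1},\dots,\mathcal{J}e_{p}\}$ of $\mathcal{D}$. Since $TN=\mathcal{D}\oplus\mathcal{D}^{\perp}$ is an orthogonal decomposition, it suffices to prove that $g\big(\sum_{a}\nabla_{e_{a}}e_{a},Z\big)=0$ for every $Z\in\mathcal{D}^{\perp}$, and likewise for $\nabla^{*}$.

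First I would record the conjugate of \eqref{c}: replacing $Y$ by $\mathcal{J}Y$ in $\overline{\nabla}_{X}\mathcal{J}Y=\mathcal{J}\overline{\nabla}_{X}^{*}Y$ and using $\mathcal{J}^{2}=-\mathrm{id}$ yields the dual identity $\overline{\nabla}_{X}^{*}\mathcal{J}Y=\mathcal{J}\overline{\nabla}_{X}Y$. These two identities drive the two halves of the statement. For the $\nabla$-case, fix $Z\in\mathcal{D}^{\perp}$ and observe that $\mathcal{J}Z\in T^{\perp}N$ because $\mathcal{D}^{\perp}$ is totally real. Since $Z$ is tangent and $\mathcal{B}$ is normal-valued, $g(\nabla_{e}e,Z)=g(\overline{\nabla}_{e}e,Z)$ for any $e$. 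For the pair $(e_{i},\mathcal{J}e_{i})$ I would compute each term by writing $e_{i}=\mathcal{J}(-\mathcal{J}e_{i})$, applying \eqref{c}, and using the skew-symmetry $g(\mathcal{J}\,\cdot\,,Z)=-g(\cdot\,,\mathcal{J}Z)$; the interior term $\nabla^{*}$ then drops out against $\mathcal{J}Z\in T^{\perp}N$, leaving
\[
g(\overline{\nabla}_{e_{i}}e_{i},Z)=g(\mathcal{B}^{*}(e_{i},\mathcal{J}e_{i}),\mathcal{J}Z),\qquad g(\overline{\nabla}_{\mathcal{J}e_{i}}\mathcal{J}e_{i},Z)=-g(\mathcal{B}^{*}(\mathcal{J}e_{i},e_{i}),\mathcal{J}Z).
\]

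By symmetry of $\mathcal{B}^{*}$ these cancel, so the contribution of each $\mathcal{J}$-pair vanishes and $\sum_{a}\nabla_{e_{a}}e_{a}\in\mathcal{D}$, which is exactly minimality with respect to $\nabla$. The $\nabla^{*}$-case is verbatim the same computation with the dual identity $\overline{\nabla}_{X}^{*}\mathcal{J}Y=\mathcal{J}\overline{\nabla}_{X}Y$ in place of \eqref{c}; this replaces $\mathcal{B}^{*}$ by $\mathcal{B}$ throughout, and symmetry of $\mathcal{B}$ gives the cancellation.

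The computation is short, and the only points requiring care are bookkeeping ones: (i) arranging to land on $\mathcal{J}Z\in T^{\perp}N$, so that the tangential connection term is annihilated and only the imbedding curvature tensor survives, and (ii) keeping the two dual connections correctly matched, since \eqref{c} interchanges $\overline{\nabla}$ and $\overline{\nabla}^{*}$. The genuine mechanism — and the only place the CR-hypothesis enters — is the pairing of $e_{i}$ with $\mathcal{J}e_{i}$ combined with the symmetry of $\mathcal{B}$ (respectively $\mathcal{B}^{*}$); I do not expect any obstacle beyond this careful matching of connections.
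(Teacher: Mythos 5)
Your proposal is correct and takes essentially the same route as the paper's own proof: both reduce minimality to testing $g(\nabla_{X}X+\nabla_{\mathcal{J}X}\mathcal{J}X,Z)=0$ against $Z\in\mathcal{D}^{\perp}$, both invoke the identity (\ref{c}) to trade $\overline{\nabla}$ for $\overline{\nabla}^{*}$, both use that $\mathcal{J}Z\in T^{\perp}N$ (the totally real condition) to kill the tangential term, and both obtain the cancellation from the symmetry of the second fundamental form. The only cosmetic difference is that the paper passes through metric compatibility and the Weingarten formula, phrasing the cancellation via the self-adjoint shape operator $\mathcal{A}^{*}_{\mathcal{J}Z}$, whereas you apply the Gauss formula and cancel with $\mathcal{B}^{*}$ (respectively $\mathcal{B}$) directly; these formulations are equivalent by (\ref{f}).
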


\begin{proof}
For any $X \in \mathcal{D}$ and $Z \in \mathcal{D}^{\perp}$, we have
\begin{eqnarray}\label{9}
g(Z, \nabla_{X}^{*}X) &=& g(Z, \overline{\nabla}_{X}^{*}X) \nonumber \\
&=& g(\mathcal{J}Z, \mathcal{J}\overline{\nabla}_{X}^{*}X) \nonumber \\
&=& g(\mathcal{J}Z, \overline{\nabla}_{X}\mathcal{J}X) \nonumber \\
&=& - g(\overline{\nabla}_{X}^{*}\mathcal{J}Z, \mathcal{J}X) \nonumber \\
&=& g(\mathcal{A}_{\mathcal{J}Z}^{*}X, \mathcal{J}X),
\end{eqnarray}
where we have used
\begin{eqnarray*}
0 = Xg(\mathcal{J}Z, \mathcal{J}X) = g(\mathcal{J}Z, \overline{\nabla}_{X}\mathcal{J}X) + g(\overline{\nabla}_{X}^{*}\mathcal{J}Z, \mathcal{J}X).
\end{eqnarray*}
Replacing $X$ by $\mathcal{J}X$ in equation (\ref{9}), we find that
\begin{eqnarray}\label{10}
g(Z, \nabla_{\mathcal{J}X}^{*}\mathcal{J}X) = - g(\mathcal{A}_{\mathcal{J}Z}^{*}X, \mathcal{J}X).
\end{eqnarray}
By adding (\ref{9}) and (\ref{10}), we get
\begin{eqnarray}\label{1.1}
g(Z, \nabla_{X}^{*}X + \nabla_{\mathcal{J}X}^{*}\mathcal{J}X) = 0.
\end{eqnarray}
From (\ref{1.1}), we conclude that $\mathcal{D}$ is minimal with respect to $\nabla^{*}$. By simple computation, we also prove that $\mathcal{D}$ is minimal with respect to $\nabla$. This completes our proof.
\end{proof}

\begin{definition}
A distribution $S$ is called involutive if $[X, Y] \in S$ for any $X, Y \in S$.
\end{definition}

\begin{lemma}\label{14}
\cite{article.7} Let $N$ be a CR-statistical submanifold of a holomorphic statistical manifold $\overline{N}$. If $\mathcal{D}$ is involutive, then
\begin{eqnarray}\label{22}
\mathcal{B}(\mathcal{J}X, Y) = \mathcal{B}(\mathcal{J}Y, X)
\end{eqnarray}
holds for any $X, Y \in \mathcal{D}$.
\end{lemma}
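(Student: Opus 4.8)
The plan is to exploit the defining identity \eqref{c} of a holomorphic statistical manifold together with the Gauss formulas \eqref{e}, and simply to read off normal components. First I would fix $X, Y \in \mathcal{D}$. Since $\mathcal{D}$ is holomorphic, $\mathcal{J}Y \in \mathcal{D} \subseteq TN$, so the Gauss formula gives
$$\overline{\nabla}_{X}(\mathcal{J}Y) = \nabla_{X}(\mathcal{J}Y) + \mathcal{B}(X, \mathcal{J}Y),$$
whose normal component is $\mathcal{B}(X, \mathcal{J}Y)$. On the other hand, by \eqref{c} the left-hand side equals $\mathcal{J}\overline{\nabla}_{X}^{*}Y = \mathcal{J}\nabla_{X}^{*}Y + \mathcal{J}\mathcal{B}^{*}(X,Y)$; splitting $\mathcal{J}\nabla_{X}^{*}Y = \mathcal{P}\nabla_{X}^{*}Y + \mathcal{F}\nabla_{X}^{*}Y$ and $\mathcal{J}\mathcal{B}^{*}(X,Y) = t\mathcal{B}^{*}(X,Y) + f\mathcal{B}^{*}(X,Y)$ into tangential and normal parts, the normal component is $\mathcal{F}\nabla_{X}^{*}Y + f\mathcal{B}^{*}(X,Y)$. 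Comparing normal components yields
$$\mathcal{B}(X, \mathcal{J}Y) = \mathcal{F}\nabla_{X}^{*}Y + f\mathcal{B}^{*}(X,Y).$$

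Next I would write the same identity with the roles of $X$ and $Y$ interchanged and subtract. Because $\mathcal{B}^{*}$ is symmetric, the terms $f\mathcal{B}^{*}(X,Y)$ cancel, leaving
$$\mathcal{B}(X, \mathcal{J}Y) - \mathcal{B}(Y, \mathcal{J}X) = \mathcal{F}\big(\nabla_{X}^{*}Y - \nabla_{Y}^{*}X\big).$$
Here I would note that since $\overline{\nabla}^{*}$ is torsion-free, the induced connection $\nabla^{*}$ is torsion-free as well (the normal parts of $\overline{\nabla}_{X}^{*}Y - \overline{\nabla}_{Y}^{*}X$ cancel by symmetry of $\mathcal{B}^{*}$, and $[X,Y]$ is tangent), so $\nabla_{X}^{*}Y - \nabla_{Y}^{*}X = [X,Y]$.

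The decisive point — and the only place where the hypotheses enter — is the right-hand side: involutivity of $\mathcal{D}$ forces $[X,Y] \in \mathcal{D}$, and holomorphicity of $\mathcal{D}$ forces $\mathcal{F}$ to vanish on $\mathcal{D}$, since for any $Z \in \mathcal{D}$ one has $\mathcal{J}Z \in \mathcal{D} \subseteq TN$, whence its normal part $\mathcal{F}Z$ is zero. Therefore $\mathcal{F}[X,Y] = 0$, and the displayed difference vanishes. Finally, using the symmetry of $\mathcal{B}$ to rewrite $\mathcal{B}(X,\mathcal{J}Y) = \mathcal{B}(\mathcal{J}Y, X)$ and $\mathcal{B}(Y,\mathcal{J}X) = \mathcal{B}(\mathcal{J}X, Y)$ gives exactly \eqref{22}.

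I expect no serious obstacle: the argument is driven entirely by \eqref{c} and the projection identity. The only mild care needed is the clean bookkeeping of tangential versus normal components and the verification that $\nabla^{*}$ inherits torsion-freeness from $\overline{\nabla}^{*}$, which is what allows the antisymmetric combination $\nabla_{X}^{*}Y - \nabla_{Y}^{*}X$ to collapse to the Lie bracket on which both hypotheses can then act.
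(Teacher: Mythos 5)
Your proof is correct, and every ingredient you use is available in the paper: the Gauss formulas (\ref{e}), the identity (\ref{c}), the stated symmetry of $\mathcal{B}$ and $\mathcal{B}^{*}$, and torsion-freeness of the dual connections. Note, however, that the paper itself offers no proof to compare against: Lemma \ref{14} is imported verbatim from Furuhata--Hasegawa \cite{article.7}, so your argument is a self-contained reconstruction rather than a variant of an in-paper one. Your route --- extracting the normal component of $\overline{\nabla}_{X}(\mathcal{J}Y)=\mathcal{J}\overline{\nabla}_{X}^{*}Y$ to get $\mathcal{B}(X,\mathcal{J}Y)=\mathcal{F}\nabla_{X}^{*}Y+f\mathcal{B}^{*}(X,Y)$, antisymmetrizing so that torsion-freeness of $\nabla^{*}$ collapses the right-hand side to $\mathcal{F}[X,Y]$, and then killing this term by involutivity plus $\mathcal{F}|_{\mathcal{D}}=0$ --- is the natural one and is exactly the mechanism this lemma rests on; the only point worth making explicit is that the restriction $Y\in\mathcal{D}$ is what licenses treating $\mathcal{J}Y$ as a tangent field in the Gauss formula, which you do observe.
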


In the light of Lemma \ref{14}, we have the following proposition:

\begin{proposition}\label{p}
Let $N$ be a CR-statistical submanifold of a holomorphic statistical manifold $\overline{N}$. Suppose that $\mathcal{D}$ is involutive. If $N$ is mixed totally geodesic with respect to $\overline{\nabla}$, then $\mathcal{A}_{V}^{*}\mathcal{J}X = - \mathcal{J}\mathcal{A}_{V}^{*}X$.  If $N$ is mixed totally geodesic with respect to $\overline{\nabla}^{*}$, then $\mathcal{A}_{V}\mathcal{J}X = - \mathcal{J}\mathcal{A}_{V}X$, for any $X \in \mathcal{D}$ and $V \in \Gamma(\mu)$.
\end{proposition}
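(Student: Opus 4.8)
The plan is to prove each assertion by pairing both sides against an arbitrary $Y\in\mathcal{D}$ and translating everything into the imbedding curvature tensors via the duality relations (\ref{f}). First I would record the elementary fact that, because $\mathcal{D}$ is holomorphic, $\mathcal{J}X\in\mathcal{D}$ and $\mathcal{J}Y\in\mathcal{D}$ whenever $X,Y\in\mathcal{D}$. Next I would show that, under the mixed totally geodesic hypothesis, the relevant shape operator sends $\mathcal{D}$ into $\mathcal{D}$: for $X\in\mathcal{D}$, $V\in\Gamma(\mu)$ and any $Z\in\mathcal{D}^{\perp}$, relation (\ref{f}) gives $g(\mathcal{A}_{V}^{*}X,Z)=g(\mathcal{B}(X,Z),V)=0$, since $\mathcal{B}(X,Z)=0$ by mixed total geodesy with respect to $\overline{\nabla}$; hence $\mathcal{A}_{V}^{*}X\in\mathcal{D}$, and the same argument gives $\mathcal{A}_{V}^{*}\mathcal{J}X\in\mathcal{D}$. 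Thus both $\mathcal{A}_{V}^{*}\mathcal{J}X$ and $-\mathcal{J}\mathcal{A}_{V}^{*}X$ lie in $\mathcal{D}$, so to prove their equality it suffices to pair them with every $Y\in\mathcal{D}$, where $g$ is positive definite and hence nondegenerate.

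For the decisive computation I would again use (\ref{f}) together with the skew-symmetry $g(\mathcal{J}A,B)=-g(A,\mathcal{J}B)$ of $\omega$ to obtain $g(\mathcal{A}_{V}^{*}\mathcal{J}X,Y)=g(\mathcal{B}(\mathcal{J}X,Y),V)$ on one side and $g(-\mathcal{J}\mathcal{A}_{V}^{*}X,Y)=g(\mathcal{A}_{V}^{*}X,\mathcal{J}Y)=g(\mathcal{B}(X,\mathcal{J}Y),V)$ on the other. These two right-hand members agree exactly because Lemma \ref{14}, available since $\mathcal{D}$ is involutive, yields $\mathcal{B}(\mathcal{J}X,Y)=\mathcal{B}(\mathcal{J}Y,X)=\mathcal{B}(X,\mathcal{J}Y)$, the final step using the symmetry of $\mathcal{B}$. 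Nondegeneracy of $g$ on $\mathcal{D}$ then forces $\mathcal{A}_{V}^{*}\mathcal{J}X=-\mathcal{J}\mathcal{A}_{V}^{*}X$, which is the first statement.

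The second statement I would prove by the exact dual argument, interchanging $\overline{\nabla}\leftrightarrow\overline{\nabla}^{*}$, $\mathcal{B}\leftrightarrow\mathcal{B}^{*}$ and $\mathcal{A}_{V}^{*}\leftrightarrow\mathcal{A}_{V}$ throughout, now using mixed total geodesy with respect to $\overline{\nabla}^{*}$. The only ingredient not literally in the text is the $\mathcal{B}^{*}$-analogue of Lemma \ref{14}, namely $\mathcal{B}^{*}(\mathcal{J}X,Y)=\mathcal{B}^{*}(\mathcal{J}Y,X)$ for $X,Y\in\mathcal{D}$. I would supply it by rerunning the proof of Lemma \ref{14} with the dual of (\ref{c}), i.e. $\overline{\nabla}_{X}^{*}\mathcal{J}Y=\mathcal{J}\overline{\nabla}_{X}Y$, which is equivalent to $\overline{\nabla}\omega=0$ just as (\ref{c}) is: for $X,Y\in\mathcal{D}$ one gets $\overline{\nabla}_{X}^{*}\mathcal{J}Y-\overline{\nabla}_{Y}^{*}\mathcal{J}X=\mathcal{J}[X,Y]$, whose right-hand side is tangent because $[X,Y]\in\mathcal{D}$ by involutivity, and comparing normal components gives the required symmetry of $\mathcal{B}^{*}$.

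I expect the main obstacle to be precisely this last point, namely verifying the dual parallelism relation and legitimizing the $\mathcal{B}^{*}$-version of the involutivity lemma, since the excerpt records Lemma \ref{14} only for $\mathcal{B}$. The remainder is bookkeeping with (\ref{f}) and the skew-symmetry of $\omega$; the only care needed is to keep the $\mathcal{A}/\mathcal{A}^{*}$ versus $\mathcal{B}^{*}/\mathcal{B}$ pairing in (\ref{f}) straight and to track the sign produced by $g(\mathcal{J}A,B)=-g(A,\mathcal{J}B)$.
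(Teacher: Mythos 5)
Your proposal is correct and its core is exactly the paper's own argument: pair against $Y\in\mathcal{D}$, convert both sides to $\mathcal{B}$ via (\ref{f}), use the skew-symmetry $g(\mathcal{J}A,B)=-g(A,\mathcal{J}B)$, and invoke Lemma \ref{14} together with the symmetry of $\mathcal{B}$. In fact your write-up is more complete than the paper's: the paper's displayed computation never uses the mixed totally geodesic hypothesis and, by pairing only against $\mathcal{D}$, strictly speaking compares only $\mathcal{D}$-components, whereas you deploy that hypothesis precisely where it is needed---to get $\mathcal{A}_{V}^{*}\mathcal{D}\subseteq\mathcal{D}$ (and dually $\mathcal{A}_{V}\mathcal{D}\subseteq\mathcal{D}$) so that nondegeneracy of $g$ on $\mathcal{D}$ yields the vector equality---and you also supply the $\mathcal{B}^{*}$-analogue of Lemma \ref{14} from the dual parallelism relation $\overline{\nabla}_{X}^{*}\mathcal{J}Y=\mathcal{J}\overline{\nabla}_{X}Y$, which the paper's closing remark ``in the same manner'' tacitly requires for the second assertion.
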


\begin{proof}
For any $X \in \mathcal{D}$, $Y \in \mathcal{D}$ and $V \in \Gamma(\mu)$, we obtain
\begin{eqnarray*}
g(\mathcal{J}\mathcal{A}_{V}^{*}X, Y) &=& - g(\mathcal{A}_{V}^{*}X, \mathcal{J}Y)\\
&=& - g(\mathcal{B}(X, \mathcal{J}Y), V)\\
&=& - g(\mathcal{B}(\mathcal{J}X, Y), V)\\
&=& - g(\mathcal{A}_{V}^{*}\mathcal{J}X, Y),
\end{eqnarray*}
where we have used Lemma \ref{14}. Hence, our first assertion follows. In the same manner, we prove our second assertion.
\end{proof}

\section{Riemannian Products in Holomorphic Statistical Manifolds}

The statistical version of CR-product defined in \cite{article.50} is as follows:

\begin{definition}
Let $N$ be a CR-statistical submanifold in a holomorphic statistical manifold $\overline{N}$. Then $N$ is called a CR-product if it is a CR-product
as a CR-submanifold in a Kaehler manifold $\overline{N}$, that is, it is locally a Riemannian product of a holomorphic submanifold $N^{T}$ and a totally real submanifold $N^{\perp}$ of $\overline{N}$.
\end{definition}

We recall some results from \cite{article.7}.

\begin{proposition}\label{pp}
Let $(N, \nabla, g)$ be a CR-statistical submanifold in a holomorphic statistical manifold $\overline{N}$. If $N$ is mixed totally geodesic with respect to $\overline{\nabla}$ and $\overline{\nabla}^{*}$, then each leaf of $\mathcal{D}^{\perp}$ is totally geodesic in $N$ with respect to $\nabla$ and $\nabla^{*}$.
\end{proposition}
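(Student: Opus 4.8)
The plan is to unwind total geodesy of a leaf into an inner-product identity and then reduce it, via the holomorphic parallelism, to a mixed value of the imbedding curvature tensor. Recall that a leaf of $\mathcal{D}^{\perp}$ is totally geodesic in $N$ with respect to $\nabla$ precisely when $\nabla_{X}Y \in \mathcal{D}^{\perp}$ for all $X, Y \in \mathcal{D}^{\perp}$, i.e. when $g(\nabla_{X}Y, Z) = 0$ for every $Z \in \mathcal{D}$; the statement for $\nabla^{*}$ is the analogous identity with $\nabla^{*}$. Since $Z \in \Gamma(TN)$, the Gauss formula (\ref{e}) gives $g(\nabla_{X}Y, Z) = g(\overline{\nabla}_{X}Y, Z)$, so everything reduces to a computation with the ambient dual connections.

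First I would treat the $\nabla$-case. Writing $W = \mathcal{J}Z$, which lies again in $\mathcal{D}$ because $\mathcal{D}$ is holomorphic, we have $Z = -\mathcal{J}W$, and the skew-symmetry of $\omega$ together with the dual form of the parallelism relation, namely $\mathcal{J}\overline{\nabla}_{X}Y = \overline{\nabla}_{X}^{*}(\mathcal{J}Y)$ (which follows from $\overline{\nabla}\omega = 0$ and the metric duality of $\overline{\nabla}, \overline{\nabla}^{*}$), yields
\begin{eqnarray*}
g(\overline{\nabla}_{X}Y, Z) &=& g(\mathcal{J}\overline{\nabla}_{X}Y, W) \\
&=& g(\overline{\nabla}_{X}^{*}(\mathcal{J}Y), W).
\end{eqnarray*}
Because $Y \in \mathcal{D}^{\perp}$ is totally real, $\mathcal{J}Y \in \mathcal{J}\mathcal{D}^{\perp} \subseteq T^{\perp}N$, so the Weingarten formula for $\overline{\nabla}^{*}$ in (\ref{e}) applies and its normal term $\nabla_{X}^{\perp *}(\mathcal{J}Y)$ is killed upon pairing with $W \in \Gamma(TN)$, leaving $-g(\mathcal{A}_{\mathcal{J}Y}^{*}X, W)$. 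Finally the first identity of (\ref{f}) rewrites this as $-g(\mathcal{B}(X, W), \mathcal{J}Y)$, and since $X \in \mathcal{D}^{\perp}$, $W \in \mathcal{D}$, the hypothesis that $N$ is mixed totally geodesic with respect to $\overline{\nabla}$ forces $\mathcal{B}(X, W) = 0$. Hence $g(\nabla_{X}Y, Z) = 0$.

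Next I would run the mirror computation for $\nabla^{*}$: the identical chain with the roles of $\overline{\nabla}$ and $\overline{\nabla}^{*}$ interchanged, using instead the relation (\ref{c}) in the form $\mathcal{J}\overline{\nabla}_{X}^{*}Y = \overline{\nabla}_{X}(\mathcal{J}Y)$, the Weingarten formula for $\overline{\nabla}$, and the second identity of (\ref{f}), reduces $g(\overline{\nabla}_{X}^{*}Y, Z)$ to $-g(\mathcal{B}^{*}(X, W), \mathcal{J}Y)$, which vanishes by mixed total geodesy with respect to $\overline{\nabla}^{*}$. This is exactly why the hypothesis requires both mixed-totally-geodesic conditions: one feeds the $\nabla$ statement and the other the $\nabla^{*}$ statement. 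Moreover, since $\nabla$ and $\nabla^{*}$ are torsion-free, $[X, Y] = \nabla_{X}Y - \nabla_{Y}X \in \mathcal{D}^{\perp}$, so $\mathcal{D}^{\perp}$ is involutive and the leaves named in the statement genuinely exist.

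I expect the only real obstacle to be bookkeeping rather than depth: one must invoke the correct (dual) version of the parallelism (\ref{c}) for each connection and correctly match $\mathcal{A}$ with $\mathcal{B}^{*}$ and $\mathcal{A}^{*}$ with $\mathcal{B}$ as dictated by (\ref{f}), since a single mismatch would pair the surviving term with the wrong imbedding tensor and break the reduction. Keeping the sign from $Z = -\mathcal{J}\mathcal{J}Z$ and the skew-symmetry of $\omega$ straight is the other place to be careful.
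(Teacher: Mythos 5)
Your argument is correct, but there is no in-paper proof to compare it with: Proposition \ref{pp} is one of the results the authors explicitly ``recall'' from \cite{article.7} and state without proof, so your write-up supplies an argument that the paper omits. Checking it step by step: the reduction of total geodesy of a leaf to $g(\nabla_{X}Y,Z)=0$ for $X,Y\in\mathcal{D}^{\perp}$, $Z\in\mathcal{D}$ is the right formulation, and the Gauss formula in (\ref{e}) indeed gives $g(\nabla_{X}Y,Z)=g(\overline{\nabla}_{X}Y,Z)$; the dual parallelism $\mathcal{J}\overline{\nabla}_{X}Y=\overline{\nabla}_{X}^{*}(\mathcal{J}Y)$ that you invoke is legitimate (substitute $\mathcal{J}Y$ for $Y$ in (\ref{c}) and apply $\mathcal{J}$); since $\mathcal{J}Y\in\mathcal{J}\mathcal{D}^{\perp}\subset T^{\perp}N$, the Weingarten formula for $\overline{\nabla}^{*}$ and the first identity of (\ref{f}) turn the expression into $-g(\mathcal{B}(X,\mathcal{J}Z),\mathcal{J}Y)$, which vanishes by the symmetry of $\mathcal{B}$ and mixed total geodesy with respect to $\overline{\nabla}$; the mirror computation with (\ref{c}), the Weingarten formula for $\overline{\nabla}$ and the second identity of (\ref{f}) handles $\nabla^{*}$. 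You also have the duality pairing right --- $\mathcal{A}^{*}\leftrightarrow\mathcal{B}$ for the $\nabla$-conclusion and $\mathcal{A}\leftrightarrow\mathcal{B}^{*}$ for the $\nabla^{*}$-conclusion --- which is precisely where a slip would break the reduction. Two further merits of your version: it actually proves the finer statement that mixed total geodesy with respect to $\overline{\nabla}$ alone makes each leaf totally geodesic with respect to $\nabla$ (and dually for the starred objects), the two hypotheses being consumed independently; and your closing remark correctly settles the existence of the leaves, since once $\nabla_{X}Y\in\mathcal{D}^{\perp}$ is established, torsion-freeness of $\nabla$ gives $[X,Y]=\nabla_{X}Y-\nabla_{Y}X\in\mathcal{D}^{\perp}$, so $\mathcal{D}^{\perp}$ is involutive and the statement is not vacuous.
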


\begin{proposition}\label{pp1}
Let $(N, \nabla, g)$ be a CR-statistical submanifold in a holomorphic statistical manifold $\overline{N}$. If $N$ is $\mathcal{D}-$totally geodesic with respect to $\overline{\nabla}$ and $\overline{\nabla}^{*}$, then $\mathcal{D}$ is completely integrable and each leaf of $\mathcal{D}$ is totally geodesic in $N$ with respect to $\nabla$ and $\nabla^{*}$.
\end{proposition}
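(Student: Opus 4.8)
The plan is to establish the single stronger fact that $\nabla_{X}Y \in \mathcal{D}$ and $\nabla_{X}^{*}Y \in \mathcal{D}$ for all $X, Y \in \mathcal{D}$; both assertions of the proposition are then immediate. Since the induced connection $\nabla$ is torsion-free, $[X, Y] = \nabla_{X}Y - \nabla_{Y}X$, so $\nabla_{X}Y \in \mathcal{D}$ forces $[X, Y] \in \mathcal{D}$, giving involutivity and hence complete integrability of $\mathcal{D}$ by Frobenius; and $\nabla_{X}Y \in \mathcal{D}$ says exactly that the $\mathcal{D}^{\perp}$-component of $\nabla_{X}Y$ vanishes, i.e.\ every leaf of $\mathcal{D}$ is totally geodesic in $N$ with respect to $\nabla$ (the argument for $\nabla^{*}$ being identical). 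The hypothesis that $N$ is $\mathcal{D}$-totally geodesic with respect to $\overline{\nabla}$ and $\overline{\nabla}^{*}$ reads $\mathcal{B}(X, Y) = 0$ and $\mathcal{B}^{*}(X, Y) = 0$ for all $X, Y \in \mathcal{D}$.

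The heart of the matter is to show $g(\nabla_{X}Y, Z) = 0$ for every $Z \in \mathcal{D}^{\perp}$. First I would invoke $\mathcal{B}(X, Y) = 0$ and the Gauss formula (\ref{e}) to write $g(\nabla_{X}Y, Z) = g(\overline{\nabla}_{X}Y, Z)$. The decisive step is to conjugate by $\mathcal{J}$: as $g$ is Hermitian, $g(\overline{\nabla}_{X}Y, Z) = g(\mathcal{J}\overline{\nabla}_{X}Y, \mathcal{J}Z)$. Now I would use the dual form of the parallelism relation (\ref{c}), namely $\mathcal{J}\overline{\nabla}_{X}Y = \overline{\nabla}_{X}^{*}(\mathcal{J}Y)$, which follows from (\ref{c}) by replacing $Y$ with $\mathcal{J}Y$ and using $\mathcal{J}^{2} = -\mathrm{id}$. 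Since $\mathcal{D}$ is holomorphic, $\mathcal{J}Y \in \mathcal{D}$, so a second application of (\ref{e}) together with $\mathcal{B}^{*}(X, \mathcal{J}Y) = 0$ gives $\overline{\nabla}_{X}^{*}(\mathcal{J}Y) = \nabla_{X}^{*}(\mathcal{J}Y) \in TN$. As $\mathcal{D}^{\perp}$ is totally real, $\mathcal{J}Z \in T^{\perp}N$, and pairing a tangent vector against a normal one yields $g(\nabla_{X}^{*}(\mathcal{J}Y), \mathcal{J}Z) = 0$. The chain of equalities then closes to $g(\nabla_{X}Y, Z) = 0$, whence $\nabla_{X}Y \in \mathcal{D}$.

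The statement for $\nabla^{*}$ is obtained by interchanging the two dual structures throughout: one writes $g(\nabla_{X}^{*}Y, Z) = g(\overline{\nabla}_{X}^{*}Y, Z)$ using $\mathcal{B}^{*}(X, Y) = 0$, inserts $\mathcal{J}$, applies (\ref{c}) in its original form $\mathcal{J}\overline{\nabla}_{X}^{*}Y = \overline{\nabla}_{X}(\mathcal{J}Y)$, and then uses $\mathcal{B}(X, \mathcal{J}Y) = 0$ to return to $TN$ while $\mathcal{J}Z$ remains normal. This gives $\nabla_{X}^{*}Y \in \mathcal{D}$ and completes the argument.

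I do not anticipate a serious obstacle; the entire content lies in recognizing that the two dual $\mathcal{D}$-total-geodesy hypotheses are precisely what is needed so that conjugation by $\mathcal{J}$ swaps $\overline{\nabla}$ for $\overline{\nabla}^{*}$ and lands the derivative inside the holomorphic distribution $\mathcal{D}$ (where the conjugate second fundamental form vanishes), while the totally real nature of $\mathcal{D}^{\perp}$ sends $\mathcal{J}Z$ into the normal bundle. The only point deserving explicit verification is the dual parallelism identity $\mathcal{J}\overline{\nabla}_{X}Y = \overline{\nabla}_{X}^{*}(\mathcal{J}Y)$, but as noted this is a one-line consequence of (\ref{c}) and $\mathcal{J}^{2} = -\mathrm{id}$.
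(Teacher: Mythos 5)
Your proof is correct. Note that there is nothing in the paper to compare it against: Proposition \ref{pp1} is recalled from the Furuhata--Hasegawa reference \cite{article.7} without proof (the paper says ``We recall some results from \cite{article.7}''), so your argument fills in a proof that the paper omits, and it does so along the lines one would expect from that source. Your reduction is sound: showing $\nabla_{X}Y \in \mathcal{D}$ and $\nabla_{X}^{*}Y \in \mathcal{D}$ for all $X, Y \in \mathcal{D}$ gives involutivity (the induced connections of a statistical submanifold are torsion-free, so $[X,Y]=\nabla_{X}Y-\nabla_{Y}X$), hence complete integrability by Frobenius, and it is exactly the statement that each leaf is totally geodesic with respect to $\nabla$ and $\nabla^{*}$. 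The central chain
\[
g(\nabla_{X}Y, Z) = g(\overline{\nabla}_{X}Y, Z) = g(\mathcal{J}\overline{\nabla}_{X}Y, \mathcal{J}Z) = g(\overline{\nabla}_{X}^{*}(\mathcal{J}Y), \mathcal{J}Z) = g(\nabla_{X}^{*}(\mathcal{J}Y), \mathcal{J}Z) = 0
\]
is valid: the dual identity $\mathcal{J}\overline{\nabla}_{X}Y = \overline{\nabla}_{X}^{*}(\mathcal{J}Y)$ does follow from (\ref{c}) by the substitution $Y \mapsto \mathcal{J}Y$ and $\mathcal{J}^{2}=-\mathrm{id}$, the insertion of $\mathcal{J}$ is justified because the Kaehler metric is Hermitian (equivalently, Remark 2.4(b)), and $\mathcal{J}Z \in T^{\perp}N$ since $\mathcal{D}^{\perp}$ is totally real.

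One refinement worth recording: the hypothesis $\mathcal{B}(X,Y)=0$ is not actually needed in your first step, because $\mathcal{B}(X,Y)$ is normal while $Z$ is tangent, so $g(\overline{\nabla}_{X}Y, Z) = g(\nabla_{X}Y, Z)$ holds unconditionally. The hypotheses enter only at the step $g(\overline{\nabla}_{X}^{*}(\mathcal{J}Y), \mathcal{J}Z)=0$, where $\mathcal{J}Z$ is normal and one needs $g(\mathcal{B}^{*}(X,\mathcal{J}Y), \mathcal{J}Z)=0$. Thus the $\overline{\nabla}^{*}$-hypothesis alone yields the $\nabla$-conclusion, and dually the $\overline{\nabla}$-hypothesis alone yields the $\nabla^{*}$-conclusion; in fact only the $\mathcal{J}\mathcal{D}^{\perp}$-components of $\mathcal{B}^{*}$ and $\mathcal{B}$ on $\mathcal{D}\times\mathcal{D}$ need to vanish. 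This sharper reading is precisely what the paper exploits in the discussion following the proposition, where the conditions (\ref{eq1}) are used to invoke Proposition \ref{pp1} en route to Theorem \ref{11}.
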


Now, if we assume that $\mathcal{A}_{\mathcal{F}\mathcal{D}^{\perp}} \mathcal{D} = 0$ and $\mathcal{A}_{\mathcal{F}\mathcal{D}^{\perp}}^{*} \mathcal{D} = 0$, then for any $X \in \mathcal{D}, Y \in \Gamma(TN)$ and $Z \in \mathcal{D}^{\perp}$, we have
\begin{eqnarray}\label{eq1}
g(\mathcal{B}(X, Y), \mathcal{F}Z) = g(\mathcal{B}^{*}(X, Y), \mathcal{F}Z) = 0.
\end{eqnarray}
In particular, if $Y \in \mathcal{D}$, then the relations in (\ref{eq1}) give $N$ is $\mathcal{D}-$totally geodesic with respect to $\overline{\nabla}$ and $\overline{\nabla}^{*}$. Thus, from Proposition \ref{pp1}, we know that $\mathcal{D}$ is completely integrable and each leaf of $\mathcal{D}$ is totally geodesic in $N$ with respect to $\nabla$ and $\nabla^{*}$. Now, for $Y \in \mathcal{D}^{\perp}$, again relations in (\ref{eq1}) imply that $N$ is mixed totally geodesic with respect to $\overline{\nabla}$ and $\overline{\nabla}^{*}$. From this and Proposition \ref{pp}, we can easily say that each leaf of $\mathcal{D}^{\perp}$ is totally geodesic in $N$ with respect to $\nabla$ and $\nabla^{*}$. Hence, $N$ is CR-product in $\overline{N}$.

We can state the following:

\begin{theorem}\label{11}
A CR-statistical submanifold $N$ of a holomorphic statistical manifold $\overline{N}$ is a CR-product if
\begin{eqnarray*}
\mathcal{A}_{\mathcal{F}\mathcal{D}^{\perp}} \mathcal{D} = \mathcal{A}_{\mathcal{F}\mathcal{D}^{\perp}}^{*} \mathcal{D} = 0.
\end{eqnarray*}
\end{theorem}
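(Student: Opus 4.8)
The plan is to follow B.-Y. Chen's classical characterization of CR-products, transported to the dual-connection setting, with the two structural results, Propositions \ref{pp} and \ref{pp1}, serving as the engine. First I would observe that for $Z \in \mathcal{D}^{\perp}$ the totally real condition forces $\mathcal{P}Z = 0$, so that $\mathcal{F}Z = \mathcal{J}Z$ and $\mathcal{F}\mathcal{D}^{\perp} = \mathcal{J}\mathcal{D}^{\perp}$ is an honest normal subbundle; this is what makes the operators $\mathcal{A}_{\mathcal{F}\mathcal{D}^{\perp}}$ and $\mathcal{A}^{*}_{\mathcal{F}\mathcal{D}^{\perp}}$ in the hypothesis meaningful. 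Feeding $V = \mathcal{F}Z$ into the duality \eqref{f}, the assumptions $\mathcal{A}^{*}_{\mathcal{F}Z}X = 0$ and $\mathcal{A}_{\mathcal{F}Z}X = 0$ for $X \in \mathcal{D}$ become
\[
g(\mathcal{B}(X,Y),\mathcal{F}Z) = g(\mathcal{A}^{*}_{\mathcal{F}Z}X, Y) = 0, \qquad g(\mathcal{B}^{*}(X,Y),\mathcal{F}Z) = g(\mathcal{A}_{\mathcal{F}Z}X, Y) = 0,
\]
for every $Y \in \Gamma(TN)$, which is exactly \eqref{eq1}. Thus the entire content of the hypothesis is the vanishing of the $\mathcal{J}\mathcal{D}^{\perp}$-components of $\mathcal{B}$ and $\mathcal{B}^{*}$ along $\mathcal{D} \times \Gamma(TN)$.

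Next I would specialise the free slot $Y$. Taking $Y \in \mathcal{D}$ in \eqref{eq1} shows $\mathcal{B}(\mathcal{D},\mathcal{D})$ and $\mathcal{B}^{*}(\mathcal{D},\mathcal{D})$ are orthogonal to $\mathcal{F}\mathcal{D}^{\perp}$, i.e. $N$ is $\mathcal{D}$-totally geodesic with respect to $\overline{\nabla}$ and $\overline{\nabla}^{*}$; Proposition \ref{pp1} then gives that $\mathcal{D}$ is completely integrable and that each of its leaves is totally geodesic in $N$ for both $\nabla$ and $\nabla^{*}$. Taking instead $Y \in \mathcal{D}^{\perp}$ yields $g(\mathcal{B}(X,Z'),\mathcal{F}Z) = g(\mathcal{B}^{*}(X,Z'),\mathcal{F}Z) = 0$, the mixed totally geodesic hypothesis of Proposition \ref{pp}, which makes each leaf of $\mathcal{D}^{\perp}$ (integrable as for any CR-submanifold) totally geodesic in $N$ for $\nabla$ and $\nabla^{*}$. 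At this stage both complementary orthogonal distributions are integrable with totally geodesic leaves in each dual connection.

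Finally, since $2\overline{\nabla}^{0} = \overline{\nabla} + \overline{\nabla}^{*}$ induces $2\nabla^{0} = \nabla + \nabla^{*}$ on $N$, total geodesy of the leaves for both $\nabla$ and $\nabla^{*}$ passes to the Levi-Civita connection $\nabla^{0}$; a local de Rham argument then splits $(N,g)$ as a Riemannian product of the $\mathcal{J}$-invariant leaf $N^{T}$ (a holomorphic submanifold) and the totally real leaf $N^{\perp}$, so that $N$ is a CR-product. I expect the delicate point to be the component bookkeeping in the middle step: relation \eqref{eq1} annihilates only the $\mathcal{F}\mathcal{D}^{\perp}$-parts of $\mathcal{B}$ and $\mathcal{B}^{*}$, leaving their $\mu$-parts untouched, so one must confirm that the intrinsic leaf computations behind Propositions \ref{pp} and \ref{pp1} never see the $\mu$-components. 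Concretely, an identity of the form $g(\nabla_{X}Y, Z) = g(\mathcal{B}^{*}(X,\mathcal{J}Y),\mathcal{F}Z)$ and its analogues, obtained from \eqref{c} and its dual, should exhibit the relevant pairings as depending only on the $\mathcal{J}\mathcal{D}^{\perp}$-component, after which the hypothesis is seen to be exactly sufficient. A secondary check is that the resulting product is metric, which is precisely why total geodesy must be secured for both dual connections simultaneously rather than for one alone.
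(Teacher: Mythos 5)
Your proposal follows essentially the same route as the paper's own proof: translate the hypothesis via the duality relations \eqref{f} into \eqref{eq1}, specialize the free slot $Y$ to $\mathcal{D}$ and to $\mathcal{D}^{\perp}$, and invoke Propositions \ref{pp1} and \ref{pp} to get totally geodesic leaves of both distributions, hence a CR-product. If anything you are more careful than the paper, which silently reads \eqref{eq1} as full $\mathcal{D}$-totally geodesic and mixed totally geodesic conditions even though it only kills the $\mathcal{J}\mathcal{D}^{\perp}$-components of $\mathcal{B}$ and $\mathcal{B}^{*}$; the identity $g(\nabla_{X}Y,Z)=g(\mathcal{B}^{*}(X,\mathcal{J}Y),\mathcal{F}Z)$ (and its dual) that you propose to close this gap is exactly the right patch, since the leaf computations indeed never see the $\mu$-components.
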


Next, we prove the following:

\begin{theorem}
Let $N$ be an $n$-dimensional generic statistical submanifold of a $2m-$dimensional holomorphic statistical manifold $\overline{N}$. If
\begin{eqnarray}\label{y}
\mathcal{P}\nabla_{X}^{*}Y = \nabla_{X}\mathcal{P}Y
\end{eqnarray}
holds, and $\mathcal{B}^{*}(X, \mathcal{D}) = 0$, for any $X, Y \in \Gamma(TN)$. Then $N$ is locally a Riemannian product of a holomorphic submanifold $N^{T}$ and a totally real submanifold $N^{\perp}$ of $\overline{N}$.
\end{theorem}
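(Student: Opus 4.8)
The plan is to read hypothesis \eqref{y} as the statement that $\mathcal{P}$ is ``statistically parallel'' and to extract from it the parallelism of the two distributions, then to feed in $\mathcal{B}^{*}(X,\mathcal{D})=0$ through the holomorphic relation \eqref{c} in order to pass from the pair $(\nabla,\nabla^{*})$ to the Levi-Civita connection $\nabla^{0}=\tfrac{1}{2}(\nabla+\nabla^{*})$, and finally to invoke a de Rham-type splitting. I first record what ``generic'' buys: since $f=0$ and $\mathcal{J}\mathcal{D}^{\perp}=T^{\perp}N$, the tensor $\mathcal{P}$ restricts to $\mathcal{J}$ on $\mathcal{D}$ and vanishes on $\mathcal{D}^{\perp}$, so $\ker\mathcal{P}=\mathcal{D}^{\perp}$ and $\mathcal{J}(T^{\perp}N)=\mathcal{D}^{\perp}$. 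Applying \eqref{y} to $Y=Z\in\mathcal{D}^{\perp}$ gives $\mathcal{P}\nabla^{*}_{X}Z=\nabla_{X}(\mathcal{P}Z)=0$, whence $\nabla^{*}_{X}Z\in\ker\mathcal{P}=\mathcal{D}^{\perp}$; thus $\mathcal{D}^{\perp}$ is $\nabla^{*}$-parallel. Applying \eqref{y} to $Y=W\in\mathcal{D}$ gives $\nabla_{X}(\mathcal{J}W)=\mathcal{P}\nabla^{*}_{X}W\in\mathcal{D}$, and since $\mathcal{J}W$ sweeps out all of $\mathcal{D}$ this yields $\nabla_{X}\mathcal{D}\subseteq\mathcal{D}$, i.e. $\mathcal{D}$ is $\nabla$-parallel. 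The two assertions are dual to one another under the duality $Xg(A,B)=g(\nabla_{X}A,B)+g(A,\nabla^{*}_{X}B)$ of the induced connections, which is immediate from \eqref{e}.

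Next I would quantify the defect preventing these from becoming $\nabla^{0}$-statements. Writing $W=\mathcal{J}W'$ with $W'\in\mathcal{D}$ and using the relation $\overline{\nabla}^{*}_{X}(\mathcal{J}W')=\mathcal{J}\overline{\nabla}_{X}W'$ obtained from \eqref{c} via $\mathcal{J}^{2}=-\mathrm{Id}$, the hypothesis $\mathcal{B}^{*}(X,\mathcal{D})=0$ makes the left-hand side equal to $\nabla^{*}_{X}W$; separating tangential and normal parts then gives
\begin{equation*}
(\nabla^{*}_{X}W)_{\mathcal{D}^{\perp}}=\mathcal{J}\,\mathcal{B}(X,W').
\end{equation*}
Thus the obstruction to $\mathcal{D}$ being $\nabla^{*}$-parallel (equivalently, to $\mathcal{D}^{\perp}$ being $\nabla$-parallel) is precisely the second fundamental form $\mathcal{B}(X,\mathcal{D})$. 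Were this to vanish, $\mathcal{D}$ would be parallel for both $\nabla$ and $\nabla^{*}$, hence for $\nabla^{0}$, and dually so would $\mathcal{D}^{\perp}$; equivalently $N$ would be $\mathcal{D}$-totally geodesic and mixed totally geodesic with respect to both $\overline{\nabla}$ and $\overline{\nabla}^{*}$. I would then quote Propositions \ref{pp1} and \ref{pp} to conclude that every leaf of $\mathcal{D}$ and of $\mathcal{D}^{\perp}$ is totally geodesic in $N$ with respect to $\nabla$ and $\nabla^{*}$; two orthogonal complementary totally geodesic foliations split $N$ locally as a Riemannian product, with the $\mathcal{D}$-leaf holomorphic (giving $N^{T}$) and the $\mathcal{D}^{\perp}$-leaf totally real (giving $N^{\perp}$). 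This is exactly the setting of Theorem \ref{11}, whose hypotheses $\mathcal{A}_{\mathcal{F}\mathcal{D}^{\perp}}\mathcal{D}=\mathcal{A}^{*}_{\mathcal{F}\mathcal{D}^{\perp}}\mathcal{D}=0$ reduce, in the generic case $\mathcal{F}\mathcal{D}^{\perp}=\mathcal{J}\mathcal{D}^{\perp}=T^{\perp}N$, to $\mathcal{B}(X,\mathcal{D})=\mathcal{B}^{*}(X,\mathcal{D})=0$; so the task reduces to establishing the first of these.

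The main obstacle is this final reconciliation of the two dual connections. The data provided control $\mathcal{B}^{*}$ on $\mathcal{D}$ and the $\nabla$-behaviour of $\mathcal{D}$, whereas the displayed identity shows that the product structure rests on controlling $\mathcal{B}$ on $\mathcal{D}$, i.e. the $\nabla^{*}$-behaviour of $\mathcal{D}$. These are a priori independent: hypothesis \eqref{y} only sees $(\nabla^{*}_{X}W)_{\mathcal{D}}$ after applying $\mathcal{P}$, so it constrains nothing about $(\nabla^{*}_{X}W)_{\mathcal{D}^{\perp}}$, and via \eqref{f} the shape operators $\mathcal{A}_{V}$ and $\mathcal{A}^{*}_{V}$ (dual to $\mathcal{B}^{*}$ and $\mathcal{B}$ respectively) are not linked by the holomorphic structure alone. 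I would therefore spend the bulk of the effort here, trying to play \eqref{c} against its dual together with the symmetry $\mathcal{B}(\mathcal{J}X,Y)=\mathcal{B}(\mathcal{J}Y,X)$ from Lemma \ref{14} (available because $\mathcal{D}$ is now involutive) so as to force $\mathcal{B}(X,\mathcal{D})=0$. If this transfer genuinely cannot be effected from $\mathcal{B}^{*}(X,\mathcal{D})=0$, the natural and minimal remedy is to read the second hypothesis as $\mathcal{B}(X,\mathcal{D})=0$, after which the argument above closes without further input.
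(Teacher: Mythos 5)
Your analysis is essentially correct, and it is in fact sharper than the paper's own proof, which follows the same route but asserts without justification exactly the point you isolated. The paper argues as follows: from \eqref{y} and the tangential part of $\overline{\nabla}_{X}(\mathcal{J}Y)=\mathcal{J}\overline{\nabla}^{*}_{X}Y$ it obtains $\mathcal{A}_{\mathcal{F}Y}X=-t\mathcal{B}^{*}(X,Y)=-\mathcal{J}\mathcal{B}^{*}(X,Y)$; substituting $\mathcal{P}Y$ for $Y$ and using $\mathcal{F}\mathcal{P}=0$ gives \eqref{ya}, i.e.\ $\mathcal{B}^{*}(X,\mathcal{P}Y)=0$ --- which is precisely the stated hypothesis $\mathcal{B}^{*}(X,\mathcal{D})=0$, so that hypothesis is redundant as printed, consistent with your reading. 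The paper then writes, for $Y\in\mathcal{D}$, the two normal-part identities $\mathcal{F}\nabla_{X}Y=\mathcal{B}^{*}(X,\mathcal{P}Y)-f\mathcal{B}(X,Y)=0$ and $\mathcal{F}\nabla^{*}_{X}Y=\mathcal{B}(X,\mathcal{P}Y)-f\mathcal{B}^{*}(X,Y)=0$. The first vanishing is legitimate ($f=0$ by genericity, plus \eqref{ya}); the second is not: $\mathcal{B}(X,\mathcal{P}Y)=0$ is never established, and it is exactly the statement $\mathcal{B}(X,\mathcal{D})=0$ you identified as the missing ingredient. The same unjustified vanishing recurs in the display $g(\mathcal{P}\nabla_{X}Y,\mathcal{P}Z)=-g(\mathcal{B}(X,\mathcal{P}Z),\mathcal{F}Y)+g(\mathcal{B}(X,Y),\mathcal{F}\mathcal{P}Z)=0$ for $Y\in\mathcal{D}^{\perp}$. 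So the transfer from $\mathcal{B}^{*}(\cdot,\mathcal{D})=0$ to $\mathcal{B}(\cdot,\mathcal{D})=0$ that your last paragraph worries about is not effected in the paper either; it is silently assumed, and without it one only gets the one-sided parallelisms, which (as you correctly stress) do not suffice for a $\nabla^{0}$-splitting.

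Your proposed emendation is therefore the right diagnosis: the second hypothesis should be read as $\mathcal{B}(X,\mathcal{D})=0$, the $\mathcal{B}^{*}$-version being automatic from \eqref{y} and genericity. With that reading your argument closes as you describe: $\mathcal{D}$ and $\mathcal{D}^{\perp}$ are parallel for both $\nabla$ and $\nabla^{*}$, hence for $\nabla^{0}$, and the local product follows, e.g.\ via Theorem \ref{11}, whose hypotheses in the generic case reduce, as you note, to $\mathcal{B}(X,\mathcal{D})=\mathcal{B}^{*}(X,\mathcal{D})=0$. Two minor remarks. First, your derivations of the two one-sided parallelisms are cleaner than the paper's: applying \eqref{y} directly with $Y\in\mathcal{D}$, respectively $Y\in\mathcal{D}^{\perp}$, needs neither genericity nor any hypothesis on the imbedding curvature tensors, whereas the paper routes both through normal-part computations. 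Second, your key identity $(\nabla^{*}_{X}W)_{\mathcal{D}^{\perp}}=\mathcal{J}\mathcal{B}(X,W')$ does not actually require $\mathcal{B}^{*}(X,\mathcal{D})=0$ as an input: in the generic case the right-hand side of $\nabla^{*}_{X}W+\mathcal{B}^{*}(X,W)=\mathcal{J}\bigl(\nabla_{X}W'+\mathcal{B}(X,W')\bigr)$ is entirely tangent, so comparing normal parts re-proves $\mathcal{B}^{*}(X,W)=0$ and comparing tangential parts yields your formula; hence replacing (rather than supplementing) the printed hypothesis causes no circularity.
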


\begin{proof}
Following (\ref{y}) and \cite{article.7}, we arrive at
\begin{eqnarray*}
\mathcal{A}_{\mathcal{F}Y}X = - t \mathcal{B}^{*}(X, Y) = - \mathcal{J}\mathcal{B}^{*}(X, Y),
\end{eqnarray*}
for any $X, Y \in \Gamma(TN)$. Replacing $Y$ by $\mathcal{P}Y$, we get
\begin{eqnarray}\label{ya}
\mathcal{B}^{*}(X, \mathcal{P}Y) = 0.
\end{eqnarray}
For any $Y \in \mathcal{D}$ and $X \in \Gamma(TN)$, we obtain
\begin{eqnarray*}\label{e4}
\mathcal{F}\nabla_{X}Y = \mathcal{B}^{*}(X, \mathcal{P}Y) - f\mathcal{B}(X, Y) = 0,
\end{eqnarray*}
and
\begin{eqnarray*}\label{e4}
\mathcal{F}\nabla_{X}^{*}Y = \mathcal{B}(X, \mathcal{P}Y) - f\mathcal{B}^{*}(X, Y) = 0.
\end{eqnarray*}
Thus, the distribution $\mathcal{D}$ is parallel with respect to $\nabla$. Now, we consider $Y \in \mathcal{D}^{\perp}$, then for any $X, Z \in \Gamma(TN)$, we have
\begin{eqnarray*}\label{e5}
g(\mathcal{P}\nabla_{X}^{*}Y, \mathcal{P}Z) &=& - g(\mathcal{B}^{*}(X, \mathcal{P}Z), \mathcal{F}Y) + g(\mathcal{B}^{*}(X, Y), \mathcal{FP}Z) = 0,
\end{eqnarray*}
and
\begin{eqnarray*}\label{e5}
g(\mathcal{P}\nabla_{X}Y, \mathcal{P}Z) &=& - g(\mathcal{B}(X, \mathcal{P}Z), \mathcal{F}Y) + g(\mathcal{B}(X, Y), \mathcal{FP}Z) = 0,
\end{eqnarray*}
Thus, the distribution $\mathcal{D}^{\perp}$ is parallel with respect to $\nabla^{*}$. Consequently, $N$ is locally a Riemannian product of a holomorphic submanifold $N^{T}$ and a totally real submanifold $N^{\perp}$ of $\overline{N}$.
\end{proof}

\section{Chen-Ricci Inequality for Statistical Submanifolds}

In this section, we obtain an optimal inequality for Ricci curvature in terms of squared norm of mean curvature and canonical structure by using an optimization technique. Following \cite{oprea}, we have

\begin{theorem}\label{oth}
Let $(N, g)$ be  a Riemannian submanifold of a Riemannian manifold $(\overline{N}, \overline{g})$ and $f : N \rightarrow \mathbb{R}$ be a differentiable function. If $x_{0} \in N$ is the solution of the optimum problem $\min\limits_{x_{0} \in N} f(x_{0})$, then
\begin{enumerate}
\item[(a)] $(grad \hspace{0.1 cm} f)(x) \in T_{x}^{\perp}N$,
\item[(b)] the bilinear form $A : T_{x}N \times T_{x}N \rightarrow \mathbb{R}$,
$$A(X, Y) = Hess_{f}(X, Y) + \overline{g}(h^{'}(X, Y), (grad \hspace{0.1 cm} f)(x))$$
\end{enumerate}
is positive semi-definite, where $h^{'}$ is the second fundamental form of $N$ in $\overline{N}$ and $grad \hspace{0.1 cm} f$ denotes the gradient of $f$.
\end{theorem}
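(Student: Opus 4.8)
The plan is to reduce both claims to the elementary first- and second-order conditions for a minimum of the one-variable function $t \mapsto f(\gamma(t))$, where $\gamma$ runs over curves contained in $N$ through $x_{0}$, and to convert these conditions into the stated tensorial identities by means of the Gauss formula $\overline{\nabla}_{X}Y = \nabla_{X}Y + h^{'}(X,Y)$ relating the Levi-Civita connections $\overline{\nabla}$ of $\overline{N}$ and $\nabla$ of $N$. Since $x_{0}$ solves $\min_{x \in N} f(x)$, for every such $\gamma$ the function $t \mapsto f(\gamma(t))$ attains a minimum at $t=0$.

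For part (a), I would take an arbitrary $X \in T_{x_{0}}N$ and any curve $\gamma$ in $N$ with $\gamma(0)=x_{0}$ and $\gamma^{'}(0)=X$. From $\frac{d}{dt}f(\gamma(t)) = \overline{g}((grad\, f)(\gamma(t)), \gamma^{'}(t))$ and the first-order condition $\frac{d}{dt}\big|_{t=0} f(\gamma(t)) = 0$, we obtain $\overline{g}((grad\, f)(x_{0}), X)=0$. As $X \in T_{x_{0}}N$ is arbitrary, $(grad\, f)(x_{0})$ is orthogonal to $T_{x_{0}}N$, that is, $(grad\, f)(x_{0}) \in T_{x_{0}}^{\perp}N$.

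For part (b), I would now specialize $\gamma$ to be a geodesic of $(N,\nabla)$ with $\gamma(0)=x_{0}$ and $\gamma^{'}(0)=X$, so that $\nabla_{\gamma^{'}}\gamma^{'}=0$ and hence $\overline{\nabla}_{\gamma^{'}}\gamma^{'} = h^{'}(\gamma^{'},\gamma^{'})$ by the Gauss formula. Differentiating $\frac{d}{dt}f(\gamma(t)) = \overline{g}(grad\, f, \gamma^{'})$ once more and using the metric compatibility of $\overline{\nabla}$ gives
\begin{align*}
\frac{d^{2}}{dt^{2}}\Big|_{t=0} f(\gamma(t)) &= \overline{g}(\overline{\nabla}_{X}\, grad\, f, X) + \overline{g}((grad\, f)(x_{0}), h^{'}(X,X)) \\
&= Hess_{f}(X,X) + \overline{g}(h^{'}(X,X), (grad\, f)(x_{0})) = A(X,X).
\end{align*}
Because $t=0$ is a minimum of $t \mapsto f(\gamma(t))$, the left-hand side is nonnegative, so $A(X,X) \geq 0$ for every $X$. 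Since $Hess_{f}$ and $h^{'}$ are symmetric, $A$ is a symmetric bilinear form, and nonnegativity on the diagonal then forces $A$ to be positive semi-definite.

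The one point that requires care --- and is essentially the only obstacle --- is the bookkeeping of which gradient and Hessian are meant: $grad\, f$ and $Hess_{f}$ must be read as the ambient objects on $\overline{N}$, so that $\overline{g}(h^{'}(X,Y),grad\, f)$ is meaningful and, through the Gauss formula, $A$ coincides with the intrinsic Hessian of $f|_{N}$ at $x_{0}$. Equally, the geodesic in part (b) must be taken with respect to the induced connection $\nabla$ on $N$; this is exactly what kills the tangential term $\nabla_{\gamma^{'}}\gamma^{'}$ and isolates the normal contribution $h^{'}(X,X)$ producing the second summand of $A$. Once these conventions are fixed, no further difficulty arises.
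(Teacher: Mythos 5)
Note first that the paper itself offers no proof of this theorem: it is imported verbatim from the reference \cite{oprea} (``Following \cite{oprea}, we have\dots''), so there is no in-paper argument to compare against. Your proof is correct and is precisely the standard argument behind Oprea's result: first- and second-order optimality conditions for $t \mapsto f(\gamma(t))$ along curves in $N$, with $\gamma$ chosen as an $N$-geodesic in part (b) so that the Gauss formula $\overline{\nabla}_{\gamma'}\gamma' = h'(\gamma',\gamma')$ isolates the normal term and identifies the second derivative with $A(X,X)$. You also correctly repaired the one imprecision in the statement as quoted --- $f$ and its gradient and Hessian must be understood as ambient objects on $\overline{N}$ (otherwise part (a) would be vacuous) --- which is exactly how the result is formulated and used in the cited source.
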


For nice applications of the above result, follow \cite{toprea}. Here we prove the following optimal inequality in the light of Theorem \ref{oth}:

\begin{theorem}\label{t}
Let $(N, \nabla, g)$ be an $m$-dimensional statistical submanifold in a holomorphic statistical manifold $\overline{N}(c)$ of constant holomorphic sectional curvature $c$. For each unit vector $X \in T_{x}N$, $x \in N$, we have
\begin{eqnarray}\label{r}
Ric^{\nabla, \nabla^{*}}(X) &\geq& 2 Ric^{0}(X) - \frac{c}{4}(m - 1 + 3||\mathcal{P} X||^{2}) \nonumber  \\
&&- \frac{m^{2}}{8} [||\mathcal{H}||^{2} + ||\mathcal{H}^{*}||^{2}],
\end{eqnarray}
where $Ric^{0}$ denotes the Ricci curvature with respect to Levi-Civita connection.
\end{theorem}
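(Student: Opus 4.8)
The plan is to prove the Chen-Ricci inequality by applying the optimization machinery of Theorem \ref{oth} to a carefully chosen quadratic function built from the components of the second fundamental forms. Let me sketch the approach.

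First I would set up the curvature computation. Fixing a unit vector $X \in T_xN$, I extend it to an orthonormal frame $\{e_1 = X, e_2, \dots, e_m\}$ of $T_xN$ and $\{e_{m+1}, \dots, e_{2n}\}$ of $T_x^\perp N$. Using the constant holomorphic curvature equation \eqref{d} together with the decomposition \eqref{g1} of $\overline{\mathcal{S}}$, I compute the ambient sectional curvatures $\overline{\mathcal{K}}(X \wedge e_i)$; the $\mathcal{J}$-terms in \eqref{d} contribute the $\|\mathcal{P}X\|^2$ correction. Then I apply the Gauss equation \eqref{g} (and its dual) to pass from $\overline{\mathcal{S}}$ to $\mathcal{S}$, which introduces the products of $\mathcal{B}$ and $\mathcal{B}^*$. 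Summing over $i = 2, \dots, m$ yields an expression of the form
\begin{eqnarray*}
Ric^{\nabla,\nabla^*}(X) &=& 2\,Ric^0(X) - \frac{c}{4}(m-1+3\|\mathcal{P}X\|^2) \\
&& + \text{(quadratic terms in } h^r_{ij}, h^{*r}_{ij}).
\end{eqnarray*}
The first two terms already match the desired right-hand side, so everything reduces to bounding the quadratic remainder below by $-\tfrac{m^2}{8}(\|\mathcal{H}\|^2 + \|\mathcal{H}^*\|^2)$.

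The heart of the argument is this algebraic estimate on the second fundamental form components, and here I would invoke Theorem \ref{oth}. Following the Oprea optimization technique, for each fixed normal index $r$ I define a quadratic function in the variables $h^r_{11}, h^r_{22}, \dots, h^r_{mm}$ (the diagonal entries), subject to a linear constraint that fixes their sum to be a constant multiple of $\mathcal{H}^r = mH^r$. I then solve the constrained minimization: the critical-point conditions from part (a) of Theorem \ref{oth} (gradient normal to the constraint hypersurface) give a relation among the $h^r_{ii}$, and the positive-semidefiniteness of the Hessian bilinear form from part (b) certifies that the critical value is genuinely a minimum. Carrying this out for the $\nabla$-connection and the dual $\nabla^*$-connection separately, and recombining via \eqref{g1}, produces the two terms $\tfrac{m^2}{8}\|\mathcal{H}\|^2$ and $\tfrac{m^2}{8}\|\mathcal{H}^*\|^2$.

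The main obstacle I anticipate is the bookkeeping in the statistical (dual-connection) setting: unlike the classical case, the Gauss equation \eqref{g} is not symmetric in $\mathcal{B}$ and $\mathcal{B}^*$, so the quadratic remainder mixes $h^r_{ij}$ with $h^{*r}_{ij}$ rather than being a clean sum of squares. The correct strategy is to pass to the Levi-Civita data using $2\nabla^0 = \nabla + \nabla^*$, which lets $Ric^0$ absorb the genuinely symmetric part; the cross terms should then reorganize so that the optimization applies to $\mathcal{B}$ and $\mathcal{B}^*$ independently. Verifying that the cross terms collapse correctly — and that the constant in front of $\|\mathcal{H}\|^2 + \|\mathcal{H}^*\|^2$ comes out to exactly $m^2/8$ rather than $m^2/4$ (the extra factor of $\tfrac12$ arising from the averaging in \eqref{g1}) — is the delicate step that must be tracked carefully throughout.
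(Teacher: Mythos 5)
Your proposal follows essentially the same route as the paper's proof: the Gauss equation combined with the constant holomorphic curvature formula, the key symmetrization step rewriting the mixed $\mathcal{B}$--$\mathcal{B}^{*}$ cross terms through the Levi-Civita data $\mathcal{B}^{0}=\tfrac{1}{2}(\mathcal{B}+\mathcal{B}^{*})$ so that $Ric^{0}$ absorbs them, and then Oprea's optimization (Theorem \ref{oth}) applied for each normal index to the diagonal entries $h^{r}_{ii}$, $h^{*r}_{ii}$ under the trace constraint, yielding the bound $\tfrac{m^2}{4}$ per connection and hence $\tfrac{m^2}{8}$ after averaging. The only cosmetic difference is that you phrase the constrained problem as a minimization (of the negated form) where the paper maximizes $f_r$ and checks the Hessian form is negative semi-definite; these are equivalent, so the proposal is correct and matches the paper's argument.
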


\begin{proof}
We choose $\{e_{1}, \dots, e_{m}\}$ as the orthonormal frame of $T_{x}N$ such that $e_{1} = X$ and $||X|| = 1$, and $\{e_{m+1}, \dots, e_{2n}\}$ as the the orthonormal frame of $T_{x}N$ in $\overline{N}$. Then by (\ref{g}) and (\ref{g1}), we have
\begin{eqnarray*}
2\overline{\mathcal{S}}(e_{1}, e_{i}, e_{1}, e_{i}) &=& 2\mathcal{S}(e_{1}, e_{i}, e_{1}, e_{i}) - g(\mathcal{B}(e_{1}, e_{1}), \mathcal{B}^{*}(e_{i}, e_{i}))\\
&& - g(\mathcal{B}^{*}(e_{1}, e_{1}), \mathcal{B}(e_{i}, e_{i}))
+ 2g(\mathcal{B}(e_{1}, e_{i}), \mathcal{B}^{*}(e_{1}, e_{i}))\\
&=& 2\mathcal{S}(e_{1}, e_{i}, e_{1}, e_{i}) - \{4g(\mathcal{B}^{0}(e_{1}, e_{1}), \mathcal{B}^{0}(e_{i}, e_{i}))\\
&&- g(\mathcal{B}(e_{1}, e_{1}), \mathcal{B}(e_{i}, e_{i})) - g(\mathcal{B}^{*}(e_{1}, e_{1}), \mathcal{B}^{*}(e_{i}, e_{i}))\\
&&- 4g(\mathcal{B}^{0}(e_{1}, e_{i}), \mathcal{B}^{0}(e_{1}, e_{i})) + g(\mathcal{B}(e_{1}, e_{i}), \mathcal{B}(e_{1}, e_{i}))\\
&&+ g(\mathcal{B}^{*}(e_{1}, e_{i}), \mathcal{B}^{*}(e_{1}, e_{i}))\}\\
&=& 2\mathcal{S}(e_{1}, e_{i}, e_{1}, e_{i}) - 4\sum_{r=m+1}^{2n}(h^{0r}_{11}h^{0r}_{ii} - (h^{0r}_{1i})^{2})\\
&&+ \sum_{r=m+1}^{2n}(h^{r}_{11}h^{r}_{ii} - (h^{r}_{1i})^{2}) + \sum_{r=m+1}^{2n}(h^{*r}_{11}h^{*r}_{ii} - (h^{*r}_{1i})^{2}),
\end{eqnarray*}
where we have used the notations $\overline{\mathcal{S}}(X,Y,Z,W) = g(\overline{\mathcal{S}}(X,Y)W,Z)$. Summing over $2 \leq i \leq m$, we have
\begin{eqnarray*}
\frac{c}{2}(m-1 + 3||\mathcal{P} X||^{2}) &=& 2Ric^{\nabla, \nabla^{*}}(X) - 4\sum_{r=m+1}^{2n}\sum_{i=2}^{m}(h^{0r}_{11}h^{0r}_{ii} - (h^{0r}_{1i})^{2}) \\
&&+ \sum_{r=m+1}^{2n}\sum_{i=2}^{m}(h^{r}_{11}h^{r}_{ii} - (h^{r}_{1i})^{2}) + \sum_{r=m+1}^{2n}\sum_{i=2}^{m}(h^{*r}_{11}h^{*r}_{ii} \\
&&- (h^{*r}_{1i})^{2}),
\end{eqnarray*}
where $Ric^{\nabla, \nabla^{*}}(X)$ denotes the Ricci curvature of $N$ with respect to $\nabla$ and $\nabla^{*}$ at $x$. Further, we derive
\begin{eqnarray}\label{g2}
2Ric^{\nabla, \nabla^{*}}(X) - \frac{c}{2}(m-1 + 3||\mathcal{P} X||^{2}) &=& 4\sum_{r=m+1}^{2n}\sum_{i=2}^{m}(h^{0r}_{11}h^{0r}_{ii} - (h^{0r}_{1i})^{2}) \nonumber \\
&&- \sum_{r=m+1}^{2n}\sum_{i=2}^{m}(h^{r}_{11}h^{r}_{ii} - (h^{r}_{1i})^{2})\nonumber \\
&&- \sum_{r=m+1}^{2n}\sum_{i=2}^{m}(h^{*r}_{11}h^{*r}_{ii} - (h^{*r}_{1i})^{2}).\nonumber \\
\end{eqnarray}
By Gauss equation with respect to Levi-Civita connection, it follows that
\begin{eqnarray*}
Ric^{0}(X) - \frac{c}{4}(m-1 + 3||\mathcal{P} X||^{2}) = \sum_{r=m+1}^{n}\sum_{i=2}^{m}(h^{0r}_{11}h^{0r}_{ii} - (h^{0r}_{1i})^{2}).
\end{eqnarray*}
Substituting into (\ref{g2}), we arrive at
\begin{eqnarray*}
2Ric^{\nabla, \nabla^{*}}(X) - \frac{c}{2}(m-1 + 3||\mathcal{P} X||^{2}) &=& 4[Ric^{0}(X) - \frac{c}{4}(m-1 + 3||\mathcal{P} X||^{2})] \\
&&- \sum_{r=m+1}^{2n}\sum_{i=2}^{m}(h^{r}_{11}h^{r}_{ii} - (h^{r}_{1i})^{2})\nonumber \\
&&- \sum_{r=m+1}^{2n}\sum_{i=2}^{m}(h^{*r}_{11}h^{*r}_{ii} - (h^{*r}_{1i})^{2}).\nonumber \\
\end{eqnarray*}
On simplifying the previous relation, we get
\begin{eqnarray}\label{g3}
&&-2Ric^{\nabla, \nabla^{*}}(X) - \frac{c}{2}(m-1 + 3||\mathcal{P} X||^{2}) + 4Ric^{0}(X)\nonumber \\
&=& \sum_{r=m+1}^{n}\sum_{i=2}^{m}(h^{r}_{11}h^{r}_{ii} - (h^{r}_{1i})^{2})\nonumber \\
&&+ \sum_{r=m+1}^{n}\sum_{i=2}^{m}(h^{*r}_{11}h^{*r}_{ii} - (h^{*r}_{1i})^{2})\nonumber \\
&\leq& \sum_{r=m+1}^{n}\sum_{i=2}^{m}h^{r}_{11}h^{r}_{ii} + \sum_{r=m+1}^{n}\sum_{i=2}^{m}h^{*r}_{11}h^{*r}_{ii}.
\end{eqnarray}
Let us define the quadratic form $f_{r}, f_{r}^{*} : \mathbb{R}^{m} \rightarrow \mathbb{R}$ by
$$f_{r}(h_{11}^{r}, h_{22}^{r}, \dots, h_{mm}^{r}) = \sum_{r=m+1}^{2n}\sum_{i=2}^{m}h^{r}_{11}h^{r}_{ii},$$ and
$$f_{r}^{*}(h_{11}^{*r}, h_{22}^{*r}, \dots, h_{mm}^{*r}) = \sum_{r=m+1}^{2n}\sum_{i=2}^{m}h^{*r}_{11}h^{*r}_{ii}.$$
We consider the constrained extremum problem $\max f_{r}$ subject to $$Q : \sum_{i=1}^{m}h^{r}_{ii} = \alpha^{r},$$ where $\alpha^{r}$ is a real constant. The gradient vector field of the function $f_{r}$ is given by $$grad \hspace{0.1 cm} f_{r} = (\sum_{i=2}^{m}h^{r}_{ii}, h^{r}_{11}, h^{r}_{11}, \dots, h^{r}_{11}).$$ For an optimal solution $p = (h_{11}^{r}, h_{22}^{r}, \dots h_{mm}^{r})$ of the problem in question, the vector $grad \hspace{0.1 cm} f_{r}$ is normal to $Q$ at the point $p$. It follows that $$h^{r}_{11} = \sum_{i=2}^{m}h^{r}_{ii} = \frac{\alpha^{r}}{2}.$$
Now, we fix $x \in Q$. The bilinear form $A : T_{x}Q \times T_{x}Q \rightarrow \mathbb{R}$ has the following expression:
\begin{eqnarray*}
A(X, Y) = Hess_{f_{r}}(X, Y) + <h^{'}(X, Y), (grad \hspace{0.1 cm} f_{r})(x)>,
\end{eqnarray*}
where $h^{'}$ denotes the second fundamental form of $Q$ in $\mathbb{R}^{m}$ and $<\cdot,\cdot>$ denotes the standard inner product on $\mathbb{R}^{m}$. The Hessian matrix of $f_{r}$ is given by
\begin{eqnarray*}
Hess_{f_{r}} = \left(
 \begin{array}{ccccc}
    0 & 1 & \dots & 1 \\
    1 & 0 & \dots& 0 \\
    \vdots & \vdots \ & \ddots & \vdots \\
    1 & 0  & \dots &  0 \\
    1 & 0  & \dots &  0
  \end{array}
\right).
\end{eqnarray*}
We consider a vector $X \in T_{x}Q$, which satisfies a relation $\sum_{i=2}^{m} X_{i} = -X_{1}$. As $h^{'} = 0$ in $\mathbb{R}^{m}$, we get
\begin{eqnarray*}
A(X, X) = Hess_{f_{r}}(X, X) &=& 2 \sum_{i=2}^{m} X_{1}X_{i} \\
&=& (X_{1} + \sum_{i=2}^{m} X_{i})^{2} - (X_{1})^{2} - (\sum_{i=2}^{m} X_{i})^{2}\\
&=& - 2(X_{1})^{2} \leq 0.
\end{eqnarray*}
However, the point $p$ is the only optimal solution, i.e., the global maximum point of problem. Thus, we obtain
\begin{eqnarray}\label{g4}
f_{r} \leq \frac{1}{4}(\sum_{i=1}^{m} h_{ii}^{r})^{2} = \frac{m^{2}}{4}(H^{r})^{2},
\end{eqnarray}
Next, we deal with the constrained extremum problem $\max f_{r}^{*}$ subject to $$Q^{*} : \sum_{i=1}^{m}h^{*r}_{ii} = \alpha^{*r},$$ where $\alpha^{*r}$ is a real constant. By similar arguments as above, we find
\begin{eqnarray}\label{g410}
f_{r}^{*} \leq \frac{1}{4}(\sum_{i=1}^{m} h_{ii}^{*r})^{2} = \frac{m^{2}}{4}(\mathcal{H}^{*r})^{2}.
\end{eqnarray}
On combining (\ref{g3}), (\ref{g4}) and (\ref{g410}), we get our desired inequality (\ref{r}).
\end{proof}

The characterisation of equality cases in Theorem \ref{t}:
\begin{theorem}
Let $(N, \nabla, g)$ be an $m$-dimensional statistical submanifold in a holomorphic statistical manifold $\overline{N}(c)$ of constant holomorphic sectional curvature $c$. The equality holds in the inequality (\ref{r}) if and only if $$h(X, X) = \frac{m}{2} \mathcal{H}(x), \textrm{             } h^{*}(X, X) = \frac{m}{2} \mathcal{H}^{*}(x)$$ and $$h(X, Y) = 0, \textrm{               } h^{*}(X, Y) = 0$$ for all $Y \in T_{x}N$ orthogonal to $X$.
\end{theorem}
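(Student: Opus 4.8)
The plan is to trace back through the chain of estimates in the proof of Theorem \ref{t} and to locate precisely the points at which inequalities (rather than identities) were introduced; since the final bound (\ref{r}) is obtained by a sequence of steps that are otherwise equalities, equality in (\ref{r}) holds exactly when every intermediate inequality is saturated. Writing $h=\mathcal{B}$, $h^{*}=\mathcal{B}^{*}$ for the two imbedding curvature tensors and $H^{r}=g(\mathcal{H},e_{r})$, $\mathcal{H}^{*r}=g(\mathcal{H}^{*},e_{r})$ for the mean-curvature components, inspection of that proof shows there are exactly two such places: the passage in (\ref{g3}) from $\sum_{r}\sum_{i}\big(h^{r}_{11}h^{r}_{ii}-(h^{r}_{1i})^{2}\big)+\sum_{r}\sum_{i}\big(h^{*r}_{11}h^{*r}_{ii}-(h^{*r}_{1i})^{2}\big)$ to $\sum_{r}\sum_{i}h^{r}_{11}h^{r}_{ii}+\sum_{r}\sum_{i}h^{*r}_{11}h^{*r}_{ii}$, and the two optimization bounds (\ref{g4}) and (\ref{g410}).

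First I would examine the step in (\ref{g3}). That inequality simply discards the nonnegative quantity $\sum_{r}\sum_{i=2}^{m}\big((h^{r}_{1i})^{2}+(h^{*r}_{1i})^{2}\big)$, so it is an equality if and only if $h^{r}_{1i}=h^{*r}_{1i}=0$ for every $i\in\{2,\dots,m\}$ and every $r$. Since $h^{r}_{1i}=g(\mathcal{B}(X,e_{i}),e_{r})$ and $\{e_{2},\dots,e_{m}\}$ is an orthonormal basis of the orthogonal complement of $X$ in $T_{x}N$, expanding an arbitrary $Y\perp X$ as $Y=\sum_{i=2}^{m}Y_{i}e_{i}$ shows this is exactly the condition $h(X,Y)=0$ and $h^{*}(X,Y)=0$ for all $Y\in T_{x}N$ orthogonal to $X$.

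Next I would treat the optimization bounds. Under the constraint $Q:\sum_{i=1}^{m}h^{r}_{ii}=\alpha^{r}$ one has $\sum_{i=2}^{m}h^{r}_{ii}=\alpha^{r}-h^{r}_{11}$, so $f_{r}=h^{r}_{11}\big(\alpha^{r}-h^{r}_{11}\big)$, a concave quadratic in $h^{r}_{11}$ whose maximum $\tfrac{1}{4}(\alpha^{r})^{2}$ is attained precisely when $h^{r}_{11}=\tfrac{\alpha^{r}}{2}$. Recalling $\alpha^{r}=\sum_{i=1}^{m}h^{r}_{ii}=mH^{r}$, equality in (\ref{g4}) for each $r$ reads $g(\mathcal{B}(X,X),e_{r})=\tfrac{m}{2}H^{r}=\tfrac{m}{2}g(\mathcal{H},e_{r})$, that is $h(X,X)=\tfrac{m}{2}\mathcal{H}(x)$; the identical argument applied to (\ref{g410}) yields $h^{*}(X,X)=\tfrac{m}{2}\mathcal{H}^{*}(x)$.

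Finally I would combine the two: equality holds in (\ref{r}) if and only if both steps are saturated simultaneously, which is precisely the conjunction of the four stated conditions, and conversely imposing these four conditions turns each inequality into an equality and hence (\ref{r}) into an equality. The only point requiring care, rather than a genuine obstacle, is the bookkeeping: one must keep the two saturation conditions disjoint, observing that the off-diagonal entries $h^{r}_{1i}$ with $i\geq2$ govern the first step while the single diagonal entry $h^{r}_{11}$ measured against the mean governs the second, verify the normalization $\alpha^{r}=mH^{r}$, and confirm that saturating one step imposes no constraint interfering with saturating the other.
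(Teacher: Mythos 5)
Your proposal is correct and follows essentially the same route as the paper: equality in (\ref{r}) is characterized by saturating the two inequalities in the proof of Theorem \ref{t}, namely the discarded terms $\sum_{r}\sum_{i\geq 2}\big((h^{r}_{1i})^{2}+(h^{*r}_{1i})^{2}\big)$ in (\ref{g3}) (giving $h(X,Y)=h^{*}(X,Y)=0$ for $Y\perp X$) and the optimization maxima (\ref{g4}), (\ref{g410}) attained only at $h^{r}_{11}=\tfrac{\alpha^{r}}{2}=\tfrac{m}{2}H^{r}$, $h^{*r}_{11}=\tfrac{m}{2}H^{*r}$ (giving $h(X,X)=\tfrac{m}{2}\mathcal{H}(x)$, $h^{*}(X,X)=\tfrac{m}{2}\mathcal{H}^{*}(x)$). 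Your elementary one-variable substitution for the constrained maximum replaces the paper's appeal to its Hessian computation, but this is the same condition stated more explicitly, not a different argument.
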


\begin{proof}
The vector field $X$ satisfies the equality case if and only if
$$h_{1i}^{r} = 0, \textrm{             } h_{1i}^{*r} = 0, \textrm{             } i \in \{2, \dots, m\},$$
and
$$h_{11}^{r} = \sum_{i=2}^{m}h_{ii}^{r}, \textrm{             } h_{11}^{*r} = \sum_{i=2}^{m}h_{ii}^{*r}, \textrm{             } r \in \{m+1, \dots, 2n\},$$
which can be rewritten as
$$h_{11}^{r} = \frac{m}{2}\mathcal{H},$$
and $$h_{11}^{*r} = \frac{m}{2}\mathcal{H}^{*}.$$
Thus, it proves our assertion.
\end{proof}

An immediate consequence of Theorem \ref{t} is as follows:

\begin{corollary}
Let $(N, \nabla, g)$ be an $m$-dimensional CR-statistical submanifold in a holomorphic statistical manifold $\overline{N}(c)$ of constant holomorphic sectional curvature $c$. Then
\begin{enumerate}
\item[(a)] For each unit vector $X \in \mathcal{D}_{x}$
\begin{eqnarray*}
Ric^{\nabla, \nabla^{*}}(X) \geq 2 Ric^{0}(X) - \frac{c}{4}(m +2) - \frac{m^{2}}{8} [||\mathcal{H}||^{2} + ||\mathcal{H}^{*}||^{2}].
\end{eqnarray*}
\item[(b)] for each unit vector $X \in \mathcal{D}_{x}^{\perp}$
\begin{eqnarray*}
Ric^{\nabla, \nabla^{*}}(X) \geq 2 Ric^{0}(X) -  \frac{c}{4}(m -1) - \frac{m^{2}}{8} [||\mathcal{H}||^{2} + ||\mathcal{H}^{*}||^{2}].
\end{eqnarray*}
\end{enumerate}
\end{corollary}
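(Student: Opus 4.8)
The plan is to obtain the Corollary directly from the master inequality (\ref{r}) of Theorem \ref{t} by evaluating the single scalar $\|\mathcal{P}X\|^{2}$ in the two complementary cases dictated by the CR-structure. Recall that every $X \in \Gamma(TN)$ splits orthogonally as $\mathcal{J}X = \mathcal{P}X + \mathcal{F}X$ into tangential and normal parts, and that on a CR-statistical submanifold the distribution $\mathcal{D}$ is holomorphic ($\mathcal{J}\mathcal{D}_{x} = \mathcal{D}_{x} \subseteq T_{x}N$) while $\mathcal{D}^{\perp}$ is totally real ($\mathcal{J}\mathcal{D}_{x}^{\perp} \subset T_{x}^{\perp}N$). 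Thus the whole argument reduces to a pointwise determination of $\mathcal{P}X$, followed by substitution into (\ref{r}).

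First I would settle part (a). If $X \in \mathcal{D}_{x}$ is a unit vector, then by holomorphicity $\mathcal{J}X \in \mathcal{D}_{x} \subseteq T_{x}N$, so $\mathcal{J}X$ has no normal component; hence $\mathcal{F}X = 0$ and $\mathcal{P}X = \mathcal{J}X$. Using $\mathcal{J}^{2} = -I$ together with the skew-symmetry $g(\mathcal{J}U, W) = - g(U, \mathcal{J}W)$, one computes $g(\mathcal{J}X, \mathcal{J}X) = - g(X, \mathcal{J}^{2}X) = g(X, X) = 1$, so that $\|\mathcal{P}X\|^{2} = 1$. Inserting $\|\mathcal{P}X\|^{2} = 1$ into (\ref{r}) replaces the factor $m - 1 + 3\|\mathcal{P}X\|^{2}$ by $m + 2$, which is exactly the bound claimed in (a).

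Next I would treat part (b) in the same spirit. If $X \in \mathcal{D}_{x}^{\perp}$ is a unit vector, then by the totally real condition $\mathcal{J}X \in T_{x}^{\perp}N$, so now $\mathcal{J}X$ is purely normal; thus $\mathcal{P}X = 0$ and $\|\mathcal{P}X\|^{2} = 0$. Feeding this into (\ref{r}) turns $m - 1 + 3\|\mathcal{P}X\|^{2}$ into $m - 1$, giving precisely the inequality in (b).

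No genuine obstacle arises, since the statement is a specialization of (\ref{r}) rather than a fresh estimate; the only point that deserves care is the norm identity $\|\mathcal{J}X\| = \|X\|$, which must be justified from the compatibility of $\mathcal{J}$ with $g$ (via $\mathcal{J}^{2} = -I$ and the skew-symmetry of $\omega$) and not merely assumed. Once that identity is recorded, both inequalities follow by the two direct substitutions of the value of $\|\mathcal{P}X\|^{2}$ into (\ref{r}).
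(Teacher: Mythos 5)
Your proposal is correct and is exactly the argument the paper intends: the corollary is stated there as an immediate consequence of Theorem \ref{t}, obtained by substituting $\|\mathcal{P}X\|^{2}=1$ for unit $X\in\mathcal{D}_{x}$ (since $\mathcal{J}X$ is tangent and $\mathcal{J}$ is a $g$-isometry) and $\|\mathcal{P}X\|^{2}=0$ for unit $X\in\mathcal{D}_{x}^{\perp}$ (since $\mathcal{J}X$ is normal). Your additional care in justifying $\|\mathcal{J}X\|=\|X\|$ via $\mathcal{J}^{2}=-I$ and the skew-symmetry of $g(\cdot,\mathcal{J}\cdot)$ is a welcome detail the paper leaves implicit.
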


\section{Some Related Examples}

\begin{example}
Let $(\overline{N}, g)$ be a family of exponential distributions of mean 0:
$$\overline{N}:= \{p(u,\Phi)  |   p(u,\Phi)=\Phi e^{-\Phi u}, u \in [0,\infty), \Phi \in (0, \infty)\},$$
a Riemannian metric is given by $g:= \Phi^{-2}(d \Phi)^{2}$, and an $\alpha-$connection $(\alpha \in \mathbb{R})$ on $\overline{N}$ is defined by
$$\overline{\nabla}_{\frac{\partial}{\partial \Phi}}^{\alpha}\frac{\partial}{\partial \Phi} = (\alpha -1)\Phi^{-1}\frac{\partial}{\partial \Phi}.$$
Then $(\overline{N}, \overline{\nabla}^{\alpha}, g)$ is a $1-$dimensional statistical manifold.

We remark that one can also construct examples for higher dimension by defining Fisher information metric and $\alpha-$connection on a family of statistical distribution (for example \cite{f3,article.7}).
\end{example}

We state the following lemma:

\begin{lemma}\label{L}
\cite{f3} Let $(\overline{N}, g, \mathcal{J})$ be a Kaehler manifold and a connection $\overline{\nabla}$ is defined as $\overline{\nabla}:= \nabla^{g} + K$, where $K$ is a $(1,2)$-tensor field satisfying the following conditions:
\begin{eqnarray}\label{c1}
K(X, Y) &=& K(Y, X),
\end{eqnarray}
\begin{eqnarray}\label{c2}
g(K(X, Y), Z) &=& g(Y, K(X, Z))
\end{eqnarray}
and
\begin{eqnarray}\label{c3}
K(X, \mathcal{J}Y) + \mathcal{J} K(X, Y) &=& 0,
\end{eqnarray}
for any $X, Y, Z \in \Gamma(T\overline{N})$. Then $(\overline{N}, \overline{\nabla}, g, \mathcal{J})$ is a holomorphic statistical manifold.
\end{lemma}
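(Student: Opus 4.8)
The plan is to verify the two requirements in the definition of a holomorphic statistical manifold: first that $(\overline{\nabla}, g)$ is a statistical structure, i.e. that $\overline{\nabla}$ is torsion-free and $\overline{\nabla}g$ is symmetric, and second that the fundamental $2$-form $\omega(X,Y) = g(X, \mathcal{J}Y)$ is $\overline{\nabla}$-parallel. The unifying idea is to push everything through the decomposition $\overline{\nabla} = \nabla^{g} + K$ and to use the three defining features of the Levi-Civita connection on the Kaehler manifold: it is torsion-free, metric ($\nabla^{g}g = 0$), and parallelizes the complex structure ($\nabla^{g}\mathcal{J} = 0$).

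For the torsion I would compute $T^{\overline{\nabla}}(X,Y) = \overline{\nabla}_{X}Y - \overline{\nabla}_{Y}X - [X,Y] = T^{\nabla^{g}}(X,Y) + \big(K(X,Y) - K(Y,X)\big)$; the first summand vanishes because $\nabla^{g}$ is Levi-Civita and the second vanishes by the symmetry (\ref{c1}). For the statistical (Codazzi) condition I would expand $(\overline{\nabla}_{X}g)(Y,Z) = (\nabla^{g}_{X}g)(Y,Z) - g(K(X,Y),Z) - g(Y,K(X,Z))$; metric compatibility kills the first term, and the self-adjointness (\ref{c2}) of $Y \mapsto K(X,Y)$ collapses the remainder to $-2g(K(X,Y),Z)$. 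This quantity is symmetric in $Y,Z$ automatically and symmetric in $X,Y$ by (\ref{c1}), so $\overline{\nabla}g$ is totally symmetric; since the dual of a statistical structure is again a statistical structure, $\overline{\nabla}^{*}$ is then automatically a torsion-free dual connection, completing the statistical structure.

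For the parallelism of $\omega$ I would first record that the Kaehler hypothesis $\nabla^{g}\mathcal{J} = 0$, together with metric compatibility, gives $\nabla^{g}\omega = 0$ in one line. Expanding $(\overline{\nabla}_{X}\omega)(Y,Z) = X g(Y,\mathcal{J}Z) - g(\overline{\nabla}_{X}Y, \mathcal{J}Z) - g(Y, \mathcal{J}\,\overline{\nabla}_{X}Z)$ and substituting $\overline{\nabla} = \nabla^{g} + K$, the $\nabla^{g}$ contribution is exactly $(\nabla^{g}_{X}\omega)(Y,Z) = 0$, leaving only $(\overline{\nabla}_{X}\omega)(Y,Z) = -g(K(X,Y),\mathcal{J}Z) - g(Y, \mathcal{J}K(X,Z))$. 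The crux is to show that these two terms cancel: on the second I would apply (\ref{c3}) in the form $\mathcal{J}K(X,Z) = -K(X,\mathcal{J}Z)$ and then the self-adjointness (\ref{c2}) with arguments $Y$ and $\mathcal{J}Z$, so that $-g(Y,\mathcal{J}K(X,Z))$ becomes $+g(K(X,Y),\mathcal{J}Z)$, which exactly cancels the first term $-g(K(X,Y),\mathcal{J}Z)$; hence $\overline{\nabla}\omega = 0$.

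I expect the genuine obstacle to be precisely this last cancellation: it is the only place where (\ref{c3}) enters and where the hypotheses must be dovetailed with the Kaehler identity $\nabla^{g}\mathcal{J} = 0$, and the order in which (\ref{c3}) and (\ref{c2}) are invoked, together with careful tracking of which slot of $K$ is being contracted, is where a sign slip is easiest to make. By contrast, the torsion and Codazzi verifications are short and mechanical, relying only on (\ref{c1}), (\ref{c2}), and on $\nabla^{g}$ being the Levi-Civita connection.
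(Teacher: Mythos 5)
Your proof is correct and complete. Note that the paper itself gives no proof of this lemma: it is imported from \cite{f3} as a known result, so there is no internal argument to compare against; your direct verification is precisely the standard one. All three steps check out --- torsion-freeness from (\ref{c1}), total symmetry of $\overline{\nabla}g$ from $(\overline{\nabla}_{X}g)(Y,Z) = -2g(K(X,Y),Z)$ via (\ref{c2}) and (\ref{c1}), and the key cancellation $-g(Y,\mathcal{J}K(X,Z)) = g(Y,K(X,\mathcal{J}Z)) = g(K(X,Y),\mathcal{J}Z)$ obtained by applying (\ref{c3}) and then (\ref{c2}), which kills the residual terms in $\overline{\nabla}\omega$. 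One small streamlining: rather than invoking the general fact that a torsion-free connection with totally symmetric $\overline{\nabla}g$ admits a torsion-free dual, you can simply exhibit the dual connection as $\overline{\nabla}^{*} = \nabla^{g} - K$ and check duality in one line using (\ref{c2}); this also makes transparent the relation $2\nabla^{g} = \overline{\nabla} + \overline{\nabla}^{*}$ recorded in the paper's Remark 2.2.
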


By using the above Lemma \ref{L}, we construct the following examples:

\begin{example}\label{1e}
\cite{article} Let us consider a Kaehler manifold $(\overline{N} = \{(x^{1}, x^{2})^{\prime} \in \mathbb{R}^{2} | x^{1} > 0\}, g, \mathcal{J})$, where a Riemanian metric $g$ and the standard complex structure $\mathcal{J}$ on $\overline{N}$ are defined by
$$g = x^{1}\{(dx^{1})^{2} + (dx^{2})^{2}\}$$
and
$$\mathcal{J} \partial_{1} = \partial_{2}, \hspace{0.2 cm} \mathcal{J} \partial_{2} = - \partial_{1},$$
where $\partial_{i} = \frac{\partial}{\partial x^{i}}$ for $i = 1, 2$. Now, for any $\lambda \in \mathbb{R}$, we define a $(1,2)$-tensor field $K$ on $\mathbb{R}^{2}$ as follows:
$$K = \sum_{i,j,l=1}^{2} k_{ij}^{l}\partial_{l}\otimes dx^{i} \otimes dx^{j},$$
where
$- k_{11}^{1} = k_{12}^{2} = k_{21}^{2} = k_{22}^{1} = \lambda$ and $k_{11}^{2} = k_{12}^{1} = k_{21}^{1} = k_{22}^{2} = 0$. Then $K$ satisfies all three conditions of Lemma \ref{L}, and hence we get a holomorphic statistical manifold $(\overline{N}, \overline{\nabla}:= \nabla^{g} + K, g, \mathcal{J})$, where an affine connection $\overline{\nabla}$ on $\overline{N}$ is given by
\begin{eqnarray*}
\overline{\nabla}_{\partial_{1}}\partial_{1} &=& \bigg(\frac{1}{2}(x^{1})^{-1} - \lambda \bigg)\partial_{1},\\
\overline{\nabla}_{\partial_{1}}\partial_{2} &=& \overline{\nabla}_{\partial_{2}}\partial_{1} = \bigg(\frac{1}{2}(x^{1})^{-1} + \lambda\bigg)\partial_{2},\\
\overline{\nabla}_{\partial_{2}}\partial_{2} &=& -\bigg(\frac{1}{2}(x^{1})^{-1} + \lambda \bigg)\partial_{1}.
\end{eqnarray*}
\end{example}

\begin{example}\label{ex}
\cite{article} Let $(g, \mathcal{J})$ be a Kaehler structure on $\overline{N}$. We take a vector field $\Lambda \in \Gamma(T\overline{N})$ and set a tensor field $K_{1} \in \Gamma(T\overline{N}^{(1,2)})$ as follows:
\begin{eqnarray}\label{e7}
K_{1}(X, Y) &=& \big[g(\mathcal{J}\Lambda, X)g(\mathcal{J}\Lambda, Y) - g(\Lambda, X)g(\Lambda, Y)\big] \Lambda \nonumber \\
&&+ \big[g(\mathcal{J}\Lambda, X)g(\Lambda, Y) + g(\Lambda, X)g(\mathcal{J}\Lambda, Y)\big]\mathcal{J}\Lambda
\end{eqnarray}
for any $X, Y \in \Gamma(T\overline{N})$. Then, by simple computation, we see that $K_{1}$ satisfies three conditions of Lemma \ref{L}, and hence a holomorphic statistical manifold $(\overline{N}, \overline{\nabla}:= \nabla^{g} + K_{1}, g, \mathcal{J})$ is obtained.
\end{example}

\begin{example}\label{ex1}
\cite{article} For a Kaehler manifold $(\overline{N}, g, \mathcal{J})$, we take a vector field $\Lambda \in \Gamma(T\overline{N})$ and set $K_{2}$ as follows:
\begin{eqnarray}\label{e8}
K_{2}(X, Y) &=& \big[g(\Lambda, \mathcal{J}X)g(\Lambda, \mathcal{J}Y) - g(\Lambda, X)g(\Lambda, Y) \nonumber \\
&&- g(\Lambda, \mathcal{J}X)g(\Lambda, Y) - g(\Lambda, X)g(\Lambda, \mathcal{J}Y)\big] \Lambda \nonumber \\
&&+ \big[g(\Lambda, X)g(\Lambda, Y) - g(\Lambda, \mathcal{J}X)g(\Lambda, \mathcal{J}Y) \nonumber \\
&&- g(\Lambda, \mathcal{J}X)g(\Lambda, Y) - g(\Lambda, X)g(\Lambda, \mathcal{J}Y)\big]\mathcal{J}\Lambda
\end{eqnarray}
for any $X, Y \in \Gamma(T\overline{N})$. Then $K_{2} \in \Gamma(T\overline{N}^{(1,2)})$ satisfies three conditions of Lemma \ref{L} as in Example \ref{ex}, and hence $(\overline{N}, \overline{\nabla}:= \nabla^{g} + K_{2}, g, \mathcal{J})$ becomes a holomorphic statistical manifold.
\end{example}

\begin{remark}
We remark that $K_{1} = K_{3}(\mathcal{J} \Lambda)$ ($K_{3} = K_{3}(\Lambda)$ is found in \cite{article.f}, as stated in Example \ref{ex2}) and $K_{2} = (1 - \mathcal{J})K_{1}$.
\end{remark}

Some examples of CR-statistical submanifolds in holomorphic statistical manifolds are given by:

\begin{example}\label{ex2}
Recall the examples of \cite{article.f}. For a Kaehler manifold $(\overline{N}, \overline{g}, \mathcal{J})$ and $\Lambda \in \Gamma(T\overline{N})$, set
\begin{eqnarray}\label{e1}
K_{3}(X, Y) &=& \big[\overline{g}(\Lambda, \mathcal{J}X)\overline{g}(\Lambda, \mathcal{J}Y) - \overline{g}(\Lambda, X)\overline{g}(\Lambda, Y)\big] \mathcal{J}\Lambda \nonumber \\
&&+ \big[\overline{g}(\Lambda, \mathcal{J}X)\overline{g}(\Lambda, Y) + \overline{g}(\Lambda, X)\overline{g}(\Lambda, \mathcal{J}Y)\big]\Lambda
\end{eqnarray}
for any $X, Y \in \Gamma(T\overline{N})$.

On the other hand, let $\mathbb{C}^{n + 1}$ be the complex Euclidean space with coordinates $z^{i} = x^{i} + \sqrt{-1}y^{i}$. For a Kaehler manifold $\mathbb{C}^{n + 1}$, we can construct $K_{3}$ as (\ref{e1}) and can get a holomorphic statistical manifold $(\mathbb{C}^{n + 1}, \overline{\nabla}:= \nabla^{\overline{g}} + K_{3}, \overline{g}, \mathcal{J})$, where $\nabla^{\overline{g}}$ is the standard statistical flat connection on $\mathbb{C}^{n + 1}$ and $\Lambda$ is the radial vector field, i.e., $\Lambda = \sum x^{i}\frac{\partial}{\partial x^{i}} + \sum y^{i} \frac{\partial}{\partial y^{i}}$. Now, one can define a statistical immersion as follows:
$$f: \mathbb{C}^{n} \times \mathbb{R} \mapsto \mathbb{C}^{n + 1}$$
and take a standard CR-submanifold of the form $N = \mathbb{C}^{n} \times \mathbb{R}$ in $\mathbb{C}^{n+1}$. Also one can find the induced statistical structure $(\nabla, g)$ on $N$ from $(\overline{\nabla}, \overline{g})$ by an immersion $f$. Then $(N, \nabla, g)$ becomes a CR-product in a holomorphic statistical submanifold $\mathbb{C}^{n+1}$.
\end{example}

\begin{example}\label{ex3}
Let $\mathbb{R}^{2n}$ be a holomorphic statistical manifold, which is topologically the same as $(\mathbb{R}^{+})^{2n}$. Let $\mathbb{R}^{2n}$ be considered as $\mathbb{C}^{n}$ and then we define a statistical submanifold by
{\small \begin{eqnarray*}
\big(-\frac{\pi}{2}, \frac{\pi}{2}\big)^{n} \ni {(x^{1}, \dots, x^{n})}\mapsto {(r_{1}\cos x^{1}, \dots, r_{n}\cos x^{n}, r_{1}\sin x^{1}, \dots, r_{n}\sin x^{n})}\in \mathbb{C}^{n},
\end{eqnarray*}}
where $r_{1}, \dots, r_{n}$ are positive constants. Then $\big(-\frac{\pi}{2}, \frac{\pi}{2}\big)^{n}$ is a Lagrangian submanifold of $\mathbb{C}^{n}$ (see Example $6$ in \cite{article.7}).

Furthermore, we consider a holomorphic statistical manifold $(\mathbb{C}^{m+1}, \overline{\nabla}:= \nabla^{\overline{g}} + K_{4}, \overline{g}, \mathcal{J})$, where $K_{4}$ is as follows (see \cite{article.f}):
\begin{eqnarray}\label{e2}
K_{4}(X, Y) &=& \big[\overline{g}(\Lambda, \mathcal{J}X)\overline{g}(\Lambda, \mathcal{J}Y) - \overline{g}(\Lambda, X)g(\Lambda, Y)\nonumber \\
&&+ \overline{g}(\Lambda, X)\overline{g}(\Lambda, \mathcal{J}Y) + \overline{g}(\Lambda, \mathcal{J}X)\overline{g}(\Lambda, Y)\big] \mathcal{J}\Lambda\nonumber \\
&&+ \big[\overline{g}(\Lambda, X)\overline{g}(\Lambda, Y) - \overline{g}(\Lambda, \mathcal{J}X)\overline{g}(\Lambda, \mathcal{J}Y)\nonumber \\
&&+ \overline{g}(\Lambda, X)\overline{g}(\Lambda, \mathcal{J}Y) + \overline{g}(\Lambda, \mathcal{J}X)\overline{g}(\Lambda, Y)\big]\Lambda,
\end{eqnarray}
for $X, Y \in \Gamma(T\mathbb{C}^{m+1})$. Now, one can consider a statistical immersion $i$, given by
\begin{eqnarray*}
i : \mathbb{C}^{n+1} \times \big(-\frac{\pi}{2}, \frac{\pi}{2}\big)^{n} \mapsto \mathbb{C}^{2n+1},
\end{eqnarray*}
where $\mathbb{C}^{n+1} \times \big(-\frac{\pi}{2}, \frac{\pi}{2}\big)^{n}$ is a product Riemannian manifold, denoted by $N$. Also one can show that $(N = \mathbb{C}^{n+1} \times \big(-\frac{\pi}{2}, \frac{\pi}{2}\big)^{n}, \nabla, g)$ is generic submanifold of $\mathbb{C}^{2n+1}$, where $(\nabla, g)$ is the statistical structure on $N$ induced by $i$ from $(\overline{\nabla}, \overline{g})$. Moreover, $N$ is a CR-statistical submanifold of $\mathbb{C}^{m+1}$, $m > 2n$. Thus, $N$ induces a natural CR-product in a holomorphic statistical manifold $\mathbb{C}^{m+1}$.
\end{example}

\end{document}